\newtheorem{theorem}{Theorem}[section]
\newtheorem{lemma}{Lemma}[section]
\newtheorem{remark}{Remark}
\journal{Elsevier}
\begin{document}

\begin{frontmatter}

\title{On the bilateral preconditioning for an L2-type all-at-once system arising
	from time-space fractional Bloch-Torrey equations}

\author[address1]{Yong-Liang Zhao}
\ead{ylzhaofde@sina.com}


\author[address2]{Xian-Ming Gu\corref{correspondingauthor}}
\ead{guxianming@live.cn}

\author[address3]{Hu Li\corref{correspondingauthor}}
\cortext[correspondingauthor]{Corresponding authors}
\ead{lihu\_0826@163.com}



\address[address1] {School of Mathematical Sciences, Sichuan Normal University, 
	Chengdu, Sichuan 610068, P.R.~China}
\address[address2] {School of Economic Mathematics, \\
	Southwestern University of Finance and Economics, Chengdu, Sichuan 611130, P.R.~China}
\address[address3] {School of Mathematics, Chengdu Normal University,
Chengdu, Sichuan 611130, P.R.~China}

\begin{abstract}
Time-space fractional Bloch-Torrey equations (TSFBTEs) are developed by some researchers to investigate 
the relationship between diffusion and fractional-order dynamics.
In this paper, we first propose a second-order implicit difference scheme for TSFBTEs by
employing the recently proposed L2-type formula [A.~A.~Alikhanov, C.~Huang, 
Appl.~Math.~Comput.~(2021) 126545].
Then, we prove the stability and the convergence of the proposed scheme.
Based on such a numerical scheme, an L2-type all-at-once system is derived.
In order to solve this system in a parallel-in-time pattern, 
a bilateral preconditioning technique is designed to accelerate the convergence of Krylov subspace solvers according to the special structure of the coefficient matrix of the system. 
We theoretically show that the condition number of the preconditioned matrix is uniformly bounded by a constant
for the time fractional order $\alpha \in (0,0.3624)$.
Numerical results are reported to show the efficiency of our method.
\end{abstract}

\begin{keyword}
Preconditioning\sep All-at-once system\sep Toeplitz matrix\sep Parallel-in-time\sep L2-type difference scheme
\end{keyword}

\end{frontmatter}


\section{Introduction}
\label{sec1}

Fractional calculus as a generalization of integer calculus fails to attract much attention until the past decades.
Due to the hereditary and memory properties of fractional derivatives, 
fractional differential equations have been successfully used in various fields 
such as electrical spectroscopy impedance \cite{lenzi2011anomalous,sun2018new}, 
earth system dynamics \cite{zhang2017review},
solute transport in porous media \cite{ghazal2018modelling} 
and image processing \cite{pu2009fractional,guo2021three}.

In physics, the diffusion model is one of important models for describing the transport process. 
The particles distributed in a normal bell-shaped pattern based on the Brownian motion are usually described by the classical diffusion model.
However, it cannot model the transport process of diffusing particles in a fractal media with locally inhomogeneous.
The reason could be that this kind of process may no longer obey the classical Fick's law.
Recently, numerous experiments show that fractional diffusion equations are more adequate than the classical one to describe anomalous diffusion \cite{metzler2000random,metzler2004restaurant,henry2006anomalous}.
Particularly, time-space fractional diffusion equations are 
generally used for modeling the anomalous diffusion, that is, 
the subdiffusion in time and the super-diffusion in space simultaneously \cite{chi2017numerical}.
In this paper, we consider the following time-space fractional Bloch-Torrey equation (TSFBTE) \cite{yu2013stability,zhu2018high}:
\begin{align}
\begin{cases}
\sideset{_0^C}{^\alpha_t}{\mathop{\mathcal{D}}} u(x,t)
= \kappa\; \frac{\partial^{\beta} u(x,t)}{\partial \left| x \right|^{\beta}} + f(x,t), 
& x \in (x_L, x_R),~t \in (0, T], \\
u(x_L,t) = u(x_R,t) = 0, & t \in [0, T],\\
u(x,0) = \phi(x), & x \in (x_L, x_R),
\end{cases}
\label{eq1.1}
\end{align}
where $0 < \alpha \leq 1$, $1 < \beta \leq 2$, the diffusion coefficient $\kappa > 0$, 
$\phi(x)$ and $f(x,t)$ are given functions.
In Eq.~\eqref{eq1.1}, $\sideset{_0^C}{^\alpha_t}{\mathop{\mathcal{D}}} u(x,t)$ and 
$\frac{\partial^{\beta} u(x,t)}{\partial \left| x \right|^{\beta}}$ 
are the Caputo and the Riesz fractional derivatives \cite{podlubny2009matrix} defined as follows:
\begin{equation*}
\sideset{_0^C}{^\alpha_t}{\mathop{\mathcal{D}}} u(x,t) =
\frac{1}{\Gamma(1 - \alpha)} \int_{0}^{t} (t - \eta)^{-\alpha}
\frac{\partial u(x,\eta)}{\partial \eta} d\eta
\end{equation*}
and 
\begin{equation*}
\frac{\partial^{\beta} u(x,t)}{\partial \left| x \right|^{\beta}} 
= -\frac{1}{2 \cos(\beta \pi/2) \Gamma(2 - \beta)} \frac{\partial^2}{\partial x^2}
\int_{- \infty}^{\infty} \left|x - \zeta \right|^{1 - \beta} u(\zeta,t) d \zeta, 
\end{equation*}
respectively. Here, $\Gamma(\cdot)$ is the Gamma function.

Eq.~\eqref{eq1.1} can be used to analyze the diffusion images of human brain tissues \cite{magin2008anomalous,yu2013numerical}.
It also provides new insights into further investigations of tissue structures and microenvironment.
Generally, it is difficult to obtain analytical solutions of fractional partial differential equations (FPDEs).
Thus, numerous numerical methods have been proposed to solve them, see \cite{li2018fast,duo2018novel,liao2019discrete,
	gu2020parallel,shen2020h2n2,zhao2021efficient,luo2021lagrange,nie2020numerical,
	chen2021explicit,zhang2021three} 
and references therein.
Yang et al.~\cite{yang2011novel} proposed two novel numerical schemes 
to solve a time-space fractional diffusion equation.
Sun et al.~\cite{sun2016some} constructed two finite difference schemes to solve Eq.~\eqref{eq1.1}.
The unique solvability, unconditional stability and convergence of their schemes are proved.
Later, Zhu and Sun \cite{zhu2018high} considered a high-order scheme for Eq.~\eqref{eq1.1}.
They proved that the convergence orders of their scheme are 3 in time and 4 in space, respectively.
Arshad et al.~\cite{arshad2017trapezoidal} proposed a second-order trapezoidal scheme 
to solve the time–space fractional diffusion equation.
In \cite{bu2015finite}, the authors proposed a numerical scheme by using a finite difference method in time
and a finite element method in space to solve the two-dimensional version of Eq.~\eqref{eq1.1}.
Dehghan and Abbaszadeh \cite{dehghan2018efficient} extended the method in \cite{bu2015finite}
to solve space/multi-time fractional Bloch-Torrey equations.
Numerical methods for solving other FPDEs can be found in 
\cite{wang2019dissipativity,zhai2019investigations,huang2020fast,wang2020dissipation,
gu2021implicit,yue2021multigrid}.

To our knowledge, we can obtain numerical solutions of FPDEs globally in time by solving
the all-at-once systems arising from FPDEs.
This advantage attracts many researchers' attentions \cite{gu2020parallel,lu2015fast,ke2015fast,huang2017fast,
	bertaccini2018limited,lin2019fast,zhao2020preconditioning,zhao2021preconditioning}.
Lu et al.~\cite{lu2015fast} proposed an approximate inversion (AI) method to solve 
the block lower triangular Toeplitz system with tri-diagonal blocks (BL3TB) from fractional sub-diffusion equations.
Ke et al.~\cite{ke2015fast} found that if the coefficient matrix in \cite{lu2015fast} is not exact BL3TB,
the AI method will be no longer available.
Thus, they \cite{ke2015fast} proposed another direct method called block divide-and-conquer (BDAC) method 
for solving the block lower triangular Toeplitz-like system with tri-diagonal block 
arising from fractional time-dependent partial differential equations.
Lin and Ng \cite{lin2019fast} developed a fast solver based on the BDAC method 
for the all-at-once system arising from the multidimensional time–space fractional
diffusion equation with variable coefficients.
Gu and Wu \cite{gu2020parallel} constructed an iterative algorithm for solving 
Volterra partial integro-differential problems with weakly singular kernel in a parallel-in-time (PinT) pattern.
Different from the above mentioned works, 
Lin et al.~\cite{lin2021parallel} developed a two-sided PinT preconditioning method 
for the all-at-once system from a non-local evolutionary equation with weakly singular kernel.
Inspired by this work, in this paper, we design a bilateral preconditioning technique
for accelerating a Krylov subspace solver to an L2-type all-at-once system arising from Eq.~\eqref{eq1.1}.

The rest of this paper is organized as follows.
In Section \ref{sec2}, we derive our L2-type all-at-once system for solving Eq.~\eqref{eq1.1}.
In Section \ref{sec3}, we propose our bilateral preconditioning technique 
and analyze the condition number of the preconditioned matrix.
Numerical results are reported in Section \ref{sec4}.
Concluding remarks are given in Section \ref{sec5}.

\section{The L2-type difference scheme and the all-at-once system}
\label{sec2}

In this section, following the idea of \cite{alikhanov2021high}, 
we propose an L2-type difference scheme for solving Eq.~\eqref{eq1.1}.
The stability and convergence of our scheme are proved.
Based on this scheme, our L2-type all-at-once system is derived.

\subsection{The L2-type difference scheme and its stability}
\label{sec2.1}

For two given positive integers $N$ and $M$, let $h = \frac{x_R - x_L}{N}$ and $\tau_t = \frac{T}{M}$.
Then, the space $[x_L, x_R]$ and the time $[0,T]$ can be discretized uniformly by 
$\Omega_h = \left\{ x_i = x_L + i h, 0 \leq i \leq N, x_0 = x_L, x_N = x_R \right\}$
and $\Omega_\tau = \left\{ t_j = j \tau_t, 0 \leq j \leq M, t_0 = 0, t_M = T \right\}$, respectively.

For approximating the Caputo fractional derivative $\sideset{_0^C}{^\alpha_t}{\mathop{\mathcal{D}}} u(x,t)$
in Eq.~\eqref{eq1.1}, we choose the following L2 formula proposed by Alikhanov and Huang \cite{alikhanov2021high}.
\begin{lemma}(\cite{alikhanov2021high}) \label{lemma2.1}
	For any $\alpha \in (0,1)$ and $y(t) \in \mathcal{C}^3[0,t_{j + 1}]~(j = 1,2,\ldots,M - 1)$.
	Then, 
	\begin{equation*}
	\left| \sideset{_0^C}{^\alpha_{t_{j + 1}}}{\mathop{\mathcal{D}}} y(t)
    - \delta_{t}^{\alpha} y(t_{j + 1}) \right| = \mathcal{O}(\tau_t^{3 - \alpha}),
	\end{equation*}
where 
\begin{equation*}
\delta_{t}^{\alpha} y(t_{j + 1})
= \frac{\tau_t^{- \alpha}}{\Gamma(2 - \alpha)} \sum_{s = 0}^{j} c^{(\alpha)}_{j - s} 
\left[ y(t_{s + 1}) - y(t_s) \right],
\end{equation*}
and for $j = 1$, 
\begin{equation*}
c^{(\alpha)}_{s}=
\begin{cases}
a_0^{(\alpha)} + b_0^{(\alpha)} + b_1^{(\alpha)}, & s = 0,\\
a_1^{(\alpha)} - b_1^{(\alpha)} - b_0^{(\alpha)}, & s = 1,
\end{cases} 
\end{equation*}

for $j = 2$,
\begin{equation*}
c^{(\alpha)}_{s}=
\begin{cases}
a_0^{(\alpha)} + b_0^{(\alpha)}, & s = 0,\\
a_1^{(\alpha)} + b_1^{(\alpha)} + b_2^{(\alpha)} - b_0^{(\alpha)}, & s = 1, \\
a_2^{(\alpha)} - b_2^{(\alpha)} - b_1^{(\alpha)}, & s = 2,
\end{cases} 
\end{equation*}

for $j \geq 3$,
\begin{equation*}
c^{(\alpha)}_{s}=
\begin{cases}
a_0^{(\alpha)} + b_0^{(\alpha)}, & s = 0,\\
a_s^{(\alpha)} + b_s^{(\alpha)} - b_{s - 1}^{(\alpha)}, & 1 \leq s \leq j - 2, \\
a_{j - 1}^{(\alpha)} + b_{j - 1}^{(\alpha)} + b_{j}^{(\alpha)} - b_{j - 2}^{(\alpha)}, & s = j - 1, \\
a_j^{(\alpha)} - b_j^{(\alpha)} - b_{j - 1}^{(\alpha)}, & s = j,
\end{cases} 
\end{equation*}
$a^{(\alpha)}_{\ell} = \left( \ell + 1 \right)^{1 - \alpha} - \ell^{1 - \alpha}~(\ell \geq 0)$
and
$b^{(\alpha)}_{\ell} = \frac{1}{2 - \alpha} \left[ \left( \ell + 1 \right)^{2 - \alpha} - \ell^{2 - \alpha}\right]
- \frac{1}{2} \left[ \left( \ell + 1 \right)^{1 - \alpha} + \ell^{1 - \alpha} \right]~(\ell \geq 0)$.
\end{lemma}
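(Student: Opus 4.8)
The plan is to prove the estimate by analyzing the local consistency error on each time subinterval and summing the contributions, treating the subinterval adjacent to the kernel singularity separately from the ``history'' intervals. Since $\delta_t^\alpha$ is obtained (following \cite{alikhanov2021high}) by replacing $y$ on each subinterval $[t_s,t_{s+1}]$ by the quadratic polynomial $P_s$ that interpolates $y$ at three consecutive nodes (namely $t_0,t_1,t_2$ on the first interval, $t_{s-1},t_s,t_{s+1}$ on an interior interval, and $t_{j-1},t_j,t_{j+1}$ on the last one) and integrating the resulting interpolant exactly against the Caputo kernel, the difference-quotient formula for $\delta_t^\alpha y(t_{j+1})$ coincides algebraically with
\begin{equation*}
\frac{1}{\Gamma(1-\alpha)}\sum_{s=0}^{j}\int_{t_s}^{t_{s+1}}(t_{j+1}-\eta)^{-\alpha}P_s'(\eta)\,d\eta .
\end{equation*}
Hence the truncation error splits as
\begin{equation*}
{}_0^C\mathcal{D}^\alpha_{t_{j+1}}y - \delta_t^\alpha y(t_{j+1})
=\frac{1}{\Gamma(1-\alpha)}\sum_{s=0}^{j}\int_{t_s}^{t_{s+1}}(t_{j+1}-\eta)^{-\alpha}\bigl[y'(\eta)-P_s'(\eta)\bigr]\,d\eta,
\end{equation*}
so the task reduces to bounding each summand. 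First I would record the standard quadratic-interpolation bounds $\lvert y(\eta)-P_s(\eta)\rvert\le C\tau_t^3\lVert y'''\rVert_\infty$ and $\lvert y'(\eta)-P_s'(\eta)\rvert\le C\tau_t^2\lVert y'''\rVert_\infty$, which are exactly where the $\mathcal{C}^3$ hypothesis enters.

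For the last subinterval $s=j$, where the kernel $(t_{j+1}-\eta)^{-\alpha}$ is singular, I would estimate directly: combining the derivative bound $\mathcal{O}(\tau_t^2)$ with $\int_{t_j}^{t_{j+1}}(t_{j+1}-\eta)^{-\alpha}\,d\eta=\tau_t^{1-\alpha}/(1-\alpha)$ gives a contribution of order $\mathcal{O}(\tau_t^{3-\alpha})$. The more delicate part is the history sum $s=0,\dots,j-1$, where a naive use of the $\mathcal{O}(\tau_t^2)$ derivative bound only yields $\mathcal{O}(\tau_t^2)$ and misses the claimed order. Here I would integrate by parts on each subinterval: because $P_s$ matches $y$ at both endpoints $t_s$ and $t_{s+1}$, the boundary terms vanish and the integral becomes $-\int_{t_s}^{t_{s+1}}\alpha(t_{j+1}-\eta)^{-\alpha-1}\bigl[y(\eta)-P_s(\eta)\bigr]\,d\eta$. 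Inserting the sharper $\mathcal{O}(\tau_t^3)$ bound on $y-P_s$ and summing gives
\begin{equation*}
C\tau_t^3\lVert y'''\rVert_\infty\int_{t_0}^{t_j}\alpha(t_{j+1}-\eta)^{-\alpha-1}\,d\eta
\le C\tau_t^3\lVert y'''\rVert_\infty\,\tau_t^{-\alpha}=\mathcal{O}(\tau_t^{3-\alpha}),
\end{equation*}
since the integral telescopes to $(t_{j+1}-t_j)^{-\alpha}-(t_{j+1}-t_0)^{-\alpha}\le\tau_t^{-\alpha}$.

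The main obstacle is precisely this history estimate: the trade-off in which one differentiates the singular kernel in order to exploit the vanishing of the interpolation error at the grid points, thereby converting the $\mathcal{O}(\tau_t^2)$ derivative bound into an $\mathcal{O}(\tau_t^3)$ function-value bound at the cost of an $\mathcal{O}(\tau_t^{-\alpha})$ factor from the integrated $(t_{j+1}-\eta)^{-\alpha-1}$. Combining the two parts yields the stated bound. Finally I would dispose of the small-index cases $j=1$ and $j=2$, for which the coefficients $c_s^{(\alpha)}$ take the special forms listed in the statement, by the same decomposition applied to their (at most three) subintervals; these are finite in number and each is checked directly, so they do not affect the asymptotic order $\mathcal{O}(\tau_t^{3-\alpha})$.
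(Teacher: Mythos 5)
The paper itself does not prove this lemma: it is quoted verbatim, coefficients and all, from the cited reference \cite{alikhanov2021high}, and the truncation-error analysis there proceeds essentially as you describe. Your reconstruction --- identifying $\delta_t^\alpha y(t_{j+1})$ with the kernel-weighted integral of the derivative of the piecewise-quadratic interpolant, estimating the singular last subinterval directly via $\int_{t_j}^{t_{j+1}}(t_{j+1}-\eta)^{-\alpha}d\eta = \tau_t^{1-\alpha}/(1-\alpha)$, and integrating by parts on the history subintervals (where the interpolation error vanishes at both endpoints) to trade the $\mathcal{O}(\tau_t^2)$ derivative bound for the $\mathcal{O}(\tau_t^3)$ value bound at the cost of a telescoping $\tau_t^{-\alpha}$ factor --- is correct and is the standard argument behind this L2-type formula, so there is nothing to fault beyond the (acceptable) appeal to the algebraic identity between the quadrature and the difference-quotient form of the coefficients $c_s^{(\alpha)}$.
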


Denote 
\begin{equation*}
\Psi^{2 + \alpha}(\mathbb{R}) = \left\{ y | \int_{-\infty}^{+\infty} \left( 1 + |\omega| \right)^{2 + \alpha} 
| \hat{y}(\omega) | d \omega < \infty, ~ y \in L^1(\mathbb{R}) \right\},
\end{equation*}
where $\hat{y}(\omega) = \int_{-\infty}^{+\infty} e^{\mathrm{\mathbf{i}} \omega t} y(t) d t$ 
is the Fourier transformation of $y(t)$ and $\mathrm{\mathbf{i}} = \sqrt{-1}$.
On the other hand, the Riesz fractional derivative 
$\frac{\partial^{\beta} u(x,t)}{\partial \left| x \right|^{\beta}}$ at $x_i$ can be approximated by 
the second-order fractional centered difference method \cite{ccelik2012crank,zhang2020exponential}.
That is, for $u(x,\cdot) \in \Psi^{2 + \alpha}(\mathbb{R})$, we have
\begin{equation}
\frac{\partial^{\beta} u(x_i,t)}{\partial \left| x \right|^{\beta}} =  
-h^{-\beta} \sum_{k = -N + i}^{i} g_{k}^{\beta} u(x_{i - k}, t) + \mathcal{O}(h^2)
= \delta_{x}^{\beta} u(x_i,t) + \mathcal{O}(h^2),
\label{eq2.1}
\end{equation}
where 
\begin{equation*}
g_{k}^{\beta} = \frac{(-1)^k\; \Gamma(1 + \beta)}{\Gamma(\beta/2 - k + 1)\; \Gamma(\beta/2 + k + 1)},
\quad k \in \mathbb{Z}.
\end{equation*}

Let $u_{i}^{j}$ be the approximation of $u(x_i,t_j)$ and $f_{i}^{j} = f(x_i,t_j)$.
Combining Lemma \ref{lemma2.1} and Eq.~\eqref{eq2.1}, 
the L2-type finite difference scheme of Eq.~\eqref{eq1.1} is:
\begin{equation}
\delta_{t}^{\alpha} u_i^{j + 1} = \kappa\; \delta_{x}^{\beta} u_i^{j + 1} + f_i^{j + 1}\quad
\mathrm{for} \quad 1 \leq i \leq N - 1, ~ 1 \leq j \leq M - 1
\label{eq2.2}
\end{equation}
with the discretized boundary conditions $u_0^j = u_N^j = 0~(0 \leq j \leq M)$
and the initial value $u_i^0 = \phi(x_i)~(1 \leq i \leq N - 1)$.
Notice that the scheme \eqref{eq2.2} is not a self-starting scheme since $u_i^1$ is unknown.
Thus, we need to obtain $u_i^1$ first by other methods.
In this work, we use the following fast L1 scheme \cite{jiang2017fast,gu2021fast} to get $u_i^1$:
\begin{equation}\label{eq2.2b}
\hat{\delta}_{t}^{\alpha} \tilde{u}_i^{j} = \kappa\; \delta_{x}^{\beta} \tilde{u}_i^{j} + f_i^{j} \quad
\mathrm{for} \quad 1 \leq i \leq N - 1, ~ 1 \leq j \leq \hat{M} = \lfloor t_1/\hat{\tau} \rfloor
\end{equation} 
with $\tilde{u}_0^j = u_0^j$, $\tilde{u}_N^j = u_N^j$ and $\tilde{u}_i^0 = u_i^0$, 
where $\hat{\tau} = \tau_t^{\frac{3 - \alpha}{2 - \alpha}}$, $u_i^{1} = \tilde{u}_i^{\hat{M}}$ and 
\begin{equation*}
\hat{\delta}_{t}^{\alpha} \tilde{u}_i^{j}
= \frac{1}{\Gamma(1 - \alpha)} 
\left[ b_{j}^{(j,\alpha)} \tilde{u}_i^j - 
\sum_{k = 1}^{j - 1} \left( b_{k + 1}^{(j,\alpha)} - b_{k}^{(j,\alpha)} \right) \tilde{u}_i^k - b_{1}^{(j,\alpha)} \tilde{u}_i^0 \right].
\end{equation*}
Here 
\begin{equation*}
b_{k}^{(j,\alpha)} = 
\begin{cases}
\sum\limits_{\ell = 1}^{\hat{M}_{exp}} w_{\ell}
\int_{k - 1}^{k} e^{-\hat{\tau} s_{\ell} (j  - s)} ds, & k = 1,2,\ldots,j - 1,\\
\frac{\hat{\tau}^{-\alpha}}{1 - \alpha}, & k = j,
\end{cases}
\end{equation*}
$\hat{M}_{exp} \in \mathbb{N}^{+}$ and $w_{\ell}, s_{\ell} \geq 0$ ($\ell = 1, 2, \ldots, \hat{M}_{exp}$).

For any $\bm{v},~\bm{w} \in \mathcal{S} = \left\{ \bm{v} | \bm{v} = (v_0, v_1, \ldots, v_N), ~
v_0 = v_N = 0 \right\}$, we define an inner product and the corresponding norm:
\begin{equation*}
\left( \bm{v}, \bm{w} \right) = h \sum_{i = 1}^{N - 1} v_i w_i, \qquad
\| \bm{v} \| = \sqrt{\left( \bm{v}, \bm{v} \right)}.
\end{equation*}

Let $\bm{u}^j = [u_1^j, u_2^j, \ldots, u_{N - 1}^j]^T$ and $\bm{f}^j = [f_1^j, f_2^j, \ldots, f_{N - 1}^j]^T$. 
With these at hand, we have the following priori estimate.
\begin{theorem}\label{th2.1}
Suppose $u_i^j~(0 \leq i \leq N, 1 \leq j \leq M)$ be a solution of the scheme \eqref{eq2.2}.
Then, we have
\begin{equation*}
\tau_t \sum_{j = 1}^{M - 1} \left( \| \bm{u}^{j + 1} \|^2 + \| \Xi^{\beta} \bm{u}^{j + 1} \|^2 \right)
\leq C_1 \left( \| \bm{u}^{1} \|^2 + \| \bm{u}^{0} \|^2 + \tau_t \sum_{j = 1}^{M - 1} \| \bm{f}^{j + 1} \|^2 \right),
\end{equation*}
where $\Xi^{\beta}$ is the square root of $-\delta_{x}^{\beta}$, 
and $C_1$ is a positive constant independent of $\tau_t$ and $h$.
\end{theorem}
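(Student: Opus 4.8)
The plan is to run the discrete energy method on the scheme \eqref{eq2.2}. First I would take the inner product of \eqref{eq2.2} with $\bm{u}^{j+1}$, i.e.\ multiply by $h\,u_i^{j+1}$ and sum over $i=1,\ldots,N-1$, obtaining for each $j$ the identity
\begin{equation*}
\left( \delta_t^\alpha \bm{u}^{j+1}, \bm{u}^{j+1} \right) - \kappa \left( \delta_x^\beta \bm{u}^{j+1}, \bm{u}^{j+1} \right) = \left( \bm{f}^{j+1}, \bm{u}^{j+1} \right).
\end{equation*}
The spatial term is dealt with immediately: since the fractional centered-difference coefficients $g_k^\beta$ generate a real symmetric Toeplitz matrix that is positive definite for $1<\beta\le 2$, the operator $-\delta_x^\beta$ is symmetric positive definite, its square root $\Xi^\beta$ is well defined, and $-\left(\delta_x^\beta \bm{u}^{j+1}, \bm{u}^{j+1}\right)=\|\Xi^\beta \bm{u}^{j+1}\|^2$. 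Because the smallest eigenvalue of $-\delta_x^\beta$ is bounded below by a mesh-independent positive constant, a discrete Poincar\'e inequality $\|\bm{u}^{j+1}\|^2 \le C_P \|\Xi^\beta \bm{u}^{j+1}\|^2$ holds; this is the device that lets me carry the $\|\bm{u}^{j+1}\|^2$ contribution into the left-hand side once $\|\Xi^\beta \bm{u}^{j+1}\|^2$ is controlled, so it suffices to bound $\tau_t\sum_{j}\|\Xi^\beta \bm{u}^{j+1}\|^2$.

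Multiplying the identity by $\tau_t$ and summing over $j=1,\ldots,M-1$ produces three sums. The source sum is routine: Cauchy--Schwarz followed by Young's inequality gives $\tau_t\sum_{j}\left(\bm{f}^{j+1},\bm{u}^{j+1}\right) \le \frac{1}{2\epsilon}\tau_t\sum_{j}\|\bm{f}^{j+1}\|^2 + \frac{\epsilon}{2}\tau_t\sum_{j}\|\bm{u}^{j+1}\|^2$, and the second term is absorbed into the left via the Poincar\'e inequality after taking $\epsilon$ small relative to $\kappa$.

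The real obstacle is the temporal term, for which I must establish the lower bound
\begin{equation*}
\tau_t \sum_{j=1}^{M-1} \left( \delta_t^\alpha \bm{u}^{j+1}, \bm{u}^{j+1} \right) \ge -C \left( \|\bm{u}^{1}\|^2 + \|\bm{u}^{0}\|^2 \right).
\end{equation*}
Setting $\bm{d}^s = \bm{u}^{s+1}-\bm{u}^s$, the left-hand side equals, up to the positive factor $\tau_t^{1-\alpha}/\Gamma(2-\alpha)$, the quadratic form $\sum_{j}\sum_{s=0}^{j} c^{(\alpha)}_{j-s}\left(\bm{d}^s,\bm{u}^{j+1}\right)$, which is \emph{nearly} Toeplitz: only the rows $j=1,2$ carry the corrections recorded in Lemma~\ref{lemma2.1}. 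I would prove coercivity of the genuinely Toeplitz part by showing that the sequence $\{c^{(\alpha)}_k\}$ is positive definite --- for instance by checking that its generating function has nonnegative real part on the unit circle, so that the symmetric part of the associated Toeplitz matrix is positive semidefinite. The low-index corrections, together with the increment $\bm{d}^0 = \bm{u}^1-\bm{u}^0$ that couples to the starting data, then contribute only the terms $\|\bm{u}^1\|^2$ and $\|\bm{u}^0\|^2$. This positivity is the counterpart, for the Alikhanov--Huang formula, of the coercivity long established for the L2-$1_\sigma$ scheme, and verifying it for the present piecewise-defined coefficients is where the bulk of the work sits.

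Combining the temporal lower bound, the spatial identity and the source estimate, and absorbing via the Poincar\'e inequality, I obtain
\begin{equation*}
\tau_t \sum_{j=1}^{M-1} \|\Xi^\beta \bm{u}^{j+1}\|^2 \le C \left( \|\bm{u}^{1}\|^2 + \|\bm{u}^{0}\|^2 + \tau_t\sum_{j=1}^{M-1}\|\bm{f}^{j+1}\|^2 \right),
\end{equation*}
and one further use of Poincar\'e appends the matching $\|\bm{u}^{j+1}\|^2$ term, which is precisely the asserted estimate with $C_1$ independent of $\tau_t$ and $h$. Should the low-index corrections resist control purely by the initial data, a discrete Gronwall inequality tailored to the fractional memory kernel would close the argument instead, but I expect the coercivity route to suffice.
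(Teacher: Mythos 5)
Your skeleton coincides with the paper's: the paper proves this theorem by the same energy argument, deferring the details to Theorem 3.1 of Alikhanov--Huang \cite{alikhanov2021high}, and the only ingredient it records explicitly is exactly your spatial step, namely $(-\delta_x^\beta \bm{u}^j,\bm{u}^j)=\|\Xi^\beta\bm{u}^j\|^2 \ge c_*^\beta (x_R-x_L)^{-\beta}\|\bm{u}^j\|^2$, which is precisely your discrete Poincar\'e inequality with a mesh-independent constant. Your treatment of the source term (Cauchy--Schwarz, Young, absorption for small $\epsilon$ relative to $\kappa$) is standard and unproblematic.

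The genuine gap is the temporal coercivity, which you correctly flag as ``where the bulk of the work sits'' but do not carry out, and the strategy you sketch for it targets the wrong object. The form to be bounded is $\sum_{j}\sum_{s\le j} c^{(\alpha)}_{j-s}\left(\bm{d}^s,\bm{u}^{j+1}\right)$, which couples the increments $\bm{d}^s=\bm{u}^{s+1}-\bm{u}^s$ to the values $\bm{u}^{j+1}$. Positive semidefiniteness of the symmetric part of the Toeplitz matrix $[c^{(\alpha)}_{j-s}]$ (equivalently, nonnegative real part of $\sum_k c^{(\alpha)}_k \zeta^k$ on the unit circle) controls a quadratic form in a \emph{single} set of variables, $\sum_{j,s} c^{(\alpha)}_{j-s}\xi_s\xi_j$, and implies nothing about this mixed form: writing $\bm{d}^s$ as a backward difference shows that the relevant lower-triangular operator is the composite convolution mapping values to the discrete derivative, whose symbol is roughly $c(\zeta)(1-\zeta)$, and it is \emph{its} positive-realness (or, as Alikhanov and Huang actually do, a chain of sign and monotonicity inequalities for $a_\ell^{(\alpha)}$, $b_\ell^{(\alpha)}$, $c_k^{(\alpha)}$) that must be established. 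In addition, the non-Toeplitz corrections at $j=1,2$ and the boundary couplings to $\bm{u}^0,\bm{u}^1$ have to be shown to contribute only $O(1)$ multiples of $\|\bm{u}^0\|^2+\|\bm{u}^1\|^2$ after multiplication by the prefactor $\tau_t^{1-\alpha}/\Gamma(2-\alpha)$, which requires summing the coefficient tails and is not automatic. Since the entire difficulty of the theorem resides in exactly this lemma, your proposal reproduces the paper's framework but leaves its core unestablished, with a misdirected plan for proving it.
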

\begin{proof}
This proof is similar to the proof of Theorem 3.1 in \cite{alikhanov2021high}.
Thus, we omit it here.
It is worth mentioning that in this proof, the property given as follows is used:
\begin{equation*}
(-\delta_{x}^{\beta} \bm{u}^j, \bm{u}^j) = \| \Xi^{\beta} \bm{u}^j \|^2 
\geq c_{*}^{\beta} (x_R - x_L)^{-\beta}  \| \bm{u}^j \|^2,
\end{equation*}
where $c_{*}^{\beta} = 2 e^{-2} \frac{(4 - \beta) (2 - \beta) \Gamma(\beta + 1)}
{(6 + \beta) (4 + \beta) (2 + \beta) \Gamma^2(\beta/2 + 1)} (3/2 + \beta/4)^{\beta + 1}$,
see \cite{sun2016some,macias2017structure} for details.
\end{proof}
Based on Theorem \ref{th2.1}, the stability and the convergence of \eqref{eq2.2} can be proved without difficulty.

\subsection{The L2-type all-at-once system}
\label{sec2.2}

In this subsection, we derive our L2-type all-at-once system based on the scheme \eqref{eq2.2}.
Firstly, we rewrite it into the matrix form:
\begin{equation}\label{eq2.3}
\frac{h^{\beta} \tau_t^{- \alpha}}{\Gamma(2 - \alpha)} \sum_{s = 0}^{j} c^{(\alpha)}_{j - s} 
\left( \bm{u}^{s + 1} - \bm{u}^{s} \right)
+ \kappa\; G_{\beta} \bm{u}^{j + 1} = h^{\beta} \bm{f}^{j + 1},
\end{equation}
where
\begin{equation*}
G_{\beta} =
\begin{bmatrix}
g_0^{\beta} & g_{-1}^{\beta} & g_{-2}^{\beta} & \cdots & g_{3 - N}^{\beta}
& g_{2 - N}^{\beta} \\
g_1^{\beta} & g_0^{\beta} & g_{-1}^{\beta} & g_{-2}^{\beta} & \cdots & g_{3 - N}^{\beta} \\
\vdots & g_1^{\beta} & g_0^{\beta} & \ddots & \ddots & \vdots \\
\vdots & \ddots & \ddots & \ddots & \ddots & g_{-2}^{\beta} \\
g_{N - 3}^{\beta} & \ddots & \ddots & \ddots & g_{0}^{\beta} & g_{- 1}^{\beta} \\
g_{N - 2}^{\beta} & g_{N - 3}^{\beta} & \cdots & \cdots & g_{1}^{\beta} & g_{0}^{\beta}
\end{bmatrix}
\end{equation*}
is a symmetric positive definite Toeplitz matrix \cite{zhang2020exponential}.

Before deriving our all-at-once system, some notations are introduced:
$\bm{0}$ is a zero matrix with suitable size,
$I_t$ and $I_x$ are two identity matrices with orders $M - 1$ and $N - 1$, respectively.
Denote 
\begin{equation*}
\bm{u} = \left[ \left( \bm{u}^2 \right)^T,  \left( \bm{u}^3 \right)^T,\cdots, \left( \bm{u}^M \right)^T \right]^T \quad\mathrm{and}\quad
\bm{f} = \left[ \left( \bm{f}^2 \right)^T,  \left( \bm{f}^3 \right)^T,\cdots, \left( \bm{f}^M \right)^T \right]^T.
\end{equation*}
To avoid the misunderstanding, we also denote
$\tilde{c}_{0}^{(\alpha)} = a_{0}^{(\alpha)} + b_{0}^{(\alpha)} + b_{1}^{(\alpha)}$,
\begin{equation*}
\tilde{c}_{k}^{(\alpha)} = a_{k}^{(\alpha)} + b_{k}^{(\alpha)} + b_{k + 1}^{(\alpha)} - b_{k - 1}^{(\alpha)},\quad
\hat{c}_{k}^{(\alpha)} = a_{k}^{(\alpha)} - b_{k}^{(\alpha)} - b_{k - 1}^{(\alpha)},\quad
k = 1,2,\ldots,M - 1.
\end{equation*}
Then, let $A_{11} = \frac{h^{\beta} \tau_t^{- \alpha}}{\Gamma(2 - \alpha)} \tilde{c}_{0}^{(\alpha)}$, 
$A_{12} = \frac{h^{\beta} \tau_t^{- \alpha}}{\Gamma(2 - \alpha)} 
\left[ \tilde{c}_{1}^{(\alpha)} - c_{0}^{(\alpha)}, \tilde{c}_{2}^{(\alpha)} - c_{1}^{(\alpha)},
\ldots, \tilde{c}_{M - 2}^{(\alpha)} - c_{M - 3}^{(\alpha)} \right]^T$ and
\begin{equation*}
A_{22} = \frac{h^{\beta} \tau_t^{- \alpha}}{\Gamma(2 - \alpha)}
\begin{bmatrix}
c_0^{(\alpha)} & 0 & \cdots & \cdots & 0 \\
c_{1}^{(\alpha)} - c_{0}^{(\alpha)} & c_0^{(\alpha)} & \ddots & \ddots & \vdots \\
\ddots & \ddots & \ddots & \ddots & 0 \\
\ddots & \ddots & \ddots & c_0^{(\alpha)} & 0 \\
c_{M - 3}^{(\alpha)} - c_{M - 4}^{(\alpha)} & \cdots & \cdots &  c_{1}^{(\alpha)} - c_{0}^{(\alpha)} & c_0^{(\alpha)}
\end{bmatrix}.
\end{equation*}
Here $c_0^{(\alpha)} = a_0^{(\alpha)} + b_0^{(\alpha)}$ and 
$c_s^{(\alpha)} = a_s^{(\alpha)} + b_s^{(\alpha)} - b_{s - 1}^{(\alpha)}$ ($s = 1, \ldots, M - 3$).

With the help of Eq.~\eqref{eq2.3} and the above notations, 
the all-at-once system is written as:
\begin{equation}\label{eq2.4}
\mathcal{M} \bm{u} = - \bm{\eta} + h^{\beta} \bm{f},
\end{equation}
where $\mathcal{M} = A_t \otimes I_x + I_t \otimes (\kappa\; G_{\beta})$ with
\begin{equation*}
A_t = 
\begin{bmatrix}
A_{11} & \bm{0} \\
A_{12} & A_{22}
\end{bmatrix}
\end{equation*}
and 
\begin{equation*}
\bm{\eta} = \frac{h^{\beta} \tau_t^{- \alpha}}{\Gamma(2 - \alpha)}
\begin{bmatrix}
\hat{c}_{1}^{(\alpha)} \left( \bm{u}^1 - \bm{u}^0 \right) - \tilde{c}_{0}^{(\alpha)} \bm{u}^1 \\
\hat{c}_{2}^{(\alpha)} \left( \bm{u}^1 - \bm{u}^0 \right) - \tilde{c}_{1}^{(\alpha)} \bm{u}^1 \\
\vdots \\
\hat{c}_{M - 1}^{(\alpha)} \left( \bm{u}^1 - \bm{u}^0 \right) - \tilde{c}_{M - 2}^{(\alpha)} \bm{u}^1 \\
\end{bmatrix}.
\end{equation*}

Some fast algorithms are designed based on the system \eqref{eq2.4}, see \cite{gu2020parallel,zhao2021preconditioning}.
However, inspired by \cite{lin2021parallel}, in this work, we concentrate on another version of \eqref{eq2.4}.
More precisely, after doing a permutation transformation of $\bm{u}$, $\bm{\eta}$ and $\bm{f}$, we have
\begin{equation}\label{eq2.5}
\tilde{\mathcal{M}} \tilde{\bm{u}} = - \tilde{\bm{\eta}} + h^{\beta} \tilde{\bm{f}},
\end{equation}
where $\tilde{\mathcal{M}} = (\kappa\; G_{\beta}) \otimes I_t + I_x \otimes A_t$.
Notice that the order of the Kronecker product in $\mathcal{M}$ is changed.
In the next section, a bilateral preconditioning technique is proposed to fast solve Eq.~\eqref{eq2.5}.

\section{A bilateral preconditioning and the condition number of the preconditioned matrix}
\label{sec3}

If we use the Gaussian elimination based 
block forward substitution method \cite{huang2017fast} to solve Eq.~\eqref{eq2.5},
the storage requirement and the computational complexity of this method are $\mathcal{O}(N M^3 + N M^2)$ and 
$\mathcal{O}(M^2)$, respectively. 
To reduce the computational cost and accelerate solving Eq.~\eqref{eq2.5}, 
a bilateral preconditioning strategy is proposed in this section.

\subsection{The bilateral preconditioning technique and its implementation}
\label{sec3.1}

Following the idea of \cite{lin2021parallel}, our left and right preconditioners can be written as follows: 
\begin{equation} \label{eq3.1}
P_{l} = \left( \kappa\; G_{\tau} \right)^{-\frac{1}{2}} \otimes A_t 
+ \left( \kappa\; G_{\tau} \right)^{\frac{1}{2}} \otimes I_t
\end{equation}
and 
\begin{equation}\label{eq3.2}
P_{r} = \left( \kappa\; G_{\tau} \right)^{\frac{1}{2}} \otimes I_t,
\end{equation}
respectively.
Here, $G_{\tau} = G_{\beta} - H_{\beta}$ is a $\tau$-matrix \cite{bini1990new}, 
where $H_{\beta}$ is a Hankel matrix and its antidiagonals are given by
\begin{equation*}
\left[ g_{2}^{\beta}, g_{3}^{\beta}, \ldots, g_{N - 2}^{\beta}, 0, 0, 0, 
g_{N - 2}^{\beta}, \dots, g_{3}^{\beta}, g_{2}^{\beta}\right]^T.
\end{equation*}

Then, the bilateral preconditioned form of Eq.~\eqref{eq2.5} is
\begin{equation}\label{eq3.3}
\begin{cases}
P_{l}^{-1} \tilde{\mathcal{M}} P_{r}^{-1} \hat{\bm{u}} 
= P_{l}^{-1} \left( - \tilde{\bm{\eta}} + h^{\beta} \tilde{\bm{f}} \right), \\
\tilde{\bm{u}} = P_{r}^{-1} \hat{\bm{u}}. 
\end{cases}
\end{equation}

We know that an $N \times N$ Toeplitz matrix multiplies a vector 
can be done by fast Fourier transform (FFT) with $\mathcal{O}(N \log N)$ operations \cite{ng2004iterative}.
Thanks to the Toeplitz structure of $A_{22}$ and $G_{\beta}$, 
we choose a Krylov subspace method \cite{saad2003iterative} 
(e.g., the BiCGSTAB method \cite{van1992bi}) to solve Eq.~\eqref{eq3.3}.
In a Krylov subspace method, we have to compute the underlying matrix-vector product.
Thus, in the next, we aim to show how to compute the matrix-vector product 
$P_{l}^{-1} \tilde{\mathcal{M}} P_{r}^{-1} \bm{v}$ ($\bm{v}$ is a vector with suitable size) efficiently.

Obviously, the product $\bm{z} = P_{l}^{-1} \tilde{\mathcal{M}} P_{r}^{-1} \bm{v}$ can be split into 
the following three sub-steps:
\begin{equation}
\begin{cases}
\bm{v}_1 = P_{r}^{-1} \bm{v}, & \textrm{Step1}, \\
\bm{v}_2 = \tilde{\mathcal{M}} \bm{v}_1, & \textrm{Step2}, \\
\bm{z} = P_{l}^{-1} \bm{v}_2, & \textrm{Step3}.
\end{cases}
\label{eq3.4}
\end{equation}

From \cite{bini1990new}, we know that the $\tau$-matrix $G_{\tau}$ can be diagonalized as follows:
\begin{equation*}
G_{\tau} = Q_x^T D_{\tau} Q_x,
\end{equation*}
where $D_{\tau} = \mathrm{diag}(\lambda_{\tau,1}, \lambda_{\tau,2}, \ldots, \lambda_{\tau,N - 1} )$ 
is a diagonal matrix containing all eigenvalues of $G_{\tau}$, and
\begin{equation*}
Q_x = \left( \sqrt{2/N} \sin \left( \frac{i j \pi}{N} \right) \right)_{1 \leq i,j \leq N - 1}
\end{equation*}
is the sine transform matrix.
With this decomposition, Step1 in Eq.~\eqref{eq3.4} can be fast implemented in the following way:
\begin{equation*}
\bm{v}_1 = P_{r}^{-1} \bm{v}
= \left( Q_x^T \otimes I_t \right) \left[ \left( \kappa\; D_{\tau} \right)^{-\frac{1}{2}} \otimes I_t \right]
\left( Q_x \otimes I_t \right) \bm{v}.
\end{equation*}

Benefiting from properties of Kronecker product and Lemma 6 in \cite{lin2021parallel},
the storage requirement and the computational cost in Step1 are 
$\mathcal{O}(MN)$ and $\mathcal{O}(M N \log N)$, respectively.
As for Step2 in Eq.~\eqref{eq3.4}, $\tilde{\mathcal{M}} \bm{z}_1$ can be computed by using FFT 
since $A_{22}$ and $G_{\beta}$ are two Toeplitz matrices.
Thus, the computational complexity and the storage requirement in Step2 are
$\mathcal{O}(M N \log (MN))$ and $\mathcal{O}(MN)$, respectively.

Compared with the previous two steps (i.e., Step1 and Step2), the third step is a little more complicated.
Using the diagonalization of $G_{\tau}$, we can rewrite $P_{l}$ as:
\begin{equation*}
P_{l} = \left( Q_x^T \otimes I_t \right) 
\left[ \left( \kappa\; D_{\tau} \right)^{\frac{1}{2}} \otimes I_t 
+ \left( \kappa\; D_{\tau} \right)^{-\frac{1}{2}} \otimes A_t \right]
\left( Q_x \otimes I_t \right).
\end{equation*}

Denote 
\begin{equation*}
\varSigma_{i} = \left( \kappa\; \lambda_{\tau,i} \right)^{\frac{1}{2}} I_t
+ \left( \kappa\; \lambda_{\tau,i} \right)^{-\frac{1}{2}} A_t,\quad \textrm{for}~i = 1,2,\ldots,N - 1.
\end{equation*}
The product $\bm{z} = P_{l}^{-1} \bm{v}_2$ can be calculated via the following three steps:
\begin{equation}
\begin{cases}
\bm{z}_1 = \left( Q_x \otimes I_t \right) \bm{v}, & \textrm{Step-(a)}, \\
\varSigma_{n}\; \bm{z}_{2,n} = \bm{z}_{1,n}, ~1 \leq n \leq N - 1, & \textrm{Step-(b)}, \\
\bm{z} = \left(  Q_x^T \otimes I_t \right) \bm{z}_2, & \textrm{Step-(c)},
\end{cases}
\label{eq3.5}
\end{equation}
where $\bm{z}_j = \left[ \bm{z}_{j,1}^T, \bm{z}_{j,2}^T, \cdots, \bm{z}_{j,N - 1}^T \right]^T$ with $j = 1,2$.
Note that $\varSigma_{i}~(1 \leq i \leq N - 1)$ are 2-by-2 block matrices, i.e.,
\begin{equation*}
\varSigma_{i} = 
\begin{bmatrix}
\varSigma_{i,11} & \bm{0} \\
\varSigma_{i,12} & \varSigma_{i,22}
\end{bmatrix},
\end{equation*}
where 
\begin{equation*}
\varSigma_{i,11} = \left( \kappa\; \lambda_{\tau,i} \right)^{\frac{1}{2}} I_{t1} + \left( \kappa\; \lambda_{\tau,i} \right)^{-\frac{1}{2}} A_{11},\quad
\varSigma_{i,12} = \left( \kappa\; \lambda_{\tau,i} \right)^{-\frac{1}{2}} A_{12},\quad
\varSigma_{i,22} = \left( \kappa\; \lambda_{\tau,i} \right)^{\frac{1}{2}} I_{t2} + \left( \kappa\; \lambda_{\tau,i} \right)^{-\frac{1}{2}} A_{22}
\end{equation*}
and $\mathrm{blkdiag}(I_{t1}, I_{t2}) = I_t$.
Then, we have $\bm{z}_{2,n} = \varSigma_{n}^{-1}\; \bm{z}_{1,n}$ for $1 \leq n \leq N - 1$,
where
\begin{equation*}
\varSigma_{n}^{-1} = 
\begin{bmatrix}
\varSigma_{n,11}^{-1} & \bm{0} \\
- \varSigma_{n,22}^{-1} \varSigma_{n,12} \varSigma_{n,11}^{-1} & \varSigma_{n,22}^{-1}
\end{bmatrix}.
\end{equation*}

From \cite{commenges1984fast}, we know that the inverse of an invertible lower triangular Toeplitz matrix is
also an invertible lower triangular Toeplitz matrix. 
Thus, we choose the modified version of Bini's algorithm \cite{lin2004fast} to compute $\varSigma_{n,22}^{-1}$.
The computational cost and the storage requirement of \eqref{eq3.5} are 
$\mathcal{O}(M N \log (MN))$ and $\mathcal{O}(MN)$, respectively.
Consequently, the computation of \eqref{eq3.3} requires $\mathcal{O}(M N \log (MN))$ flops.
It is worth remarking that the invertibilities of $P_{l}$ and $P_{r}$ are not discussed.
We leave it to the following subsection.

\subsection{The condition number of the preconditioned matrix}
\label{sec3.2}

In this subsection, we show the nonsingularities of $P_{l}$ and $P_{r}$, 
and estimate the condition number of the preconditioned matrix $P_{l}^{-1} \tilde{\mathcal{M}} P_{r}^{-1}$.

Firstly, we prove that $A_t + A_t^T$ is positive definite.
Before doing this, the following properties are needed.
\begin{lemma}(\cite{gao2014new})\label{lemma3.1}
For $\alpha \in (0, 1)$, it holds
\begin{itemize}
	\item[(i)] {$a_k^{(\alpha)} > 0$ ($k \geq 0$) and $a_{0}^{(\alpha)} > a_1^{(\alpha)} > \ldots$;} 
	\item[(ii)] {$b_k^{(\alpha)} > 0$ ($k \geq 0$) and $b_{0}^{(\alpha)} > b_1^{(\alpha)} > \ldots$.}
\end{itemize}
\end{lemma}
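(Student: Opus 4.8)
The plan is to read both families of coefficients as one-panel quadrature remainders for the smooth kernel $f(x) = x^{1-\alpha}$ and then exploit the signs of the derivatives of $f$. First I would record the facts used throughout: since $\alpha \in (0,1)$, the function $f(x) = x^{1-\alpha}$ is strictly increasing and strictly concave on $(0,\infty)$, because $f'(x) = (1-\alpha)x^{-\alpha} > 0$ and $f''(x) = -\alpha(1-\alpha)x^{-1-\alpha} < 0$; and the elementary identity
\begin{equation*}
\int_{\ell}^{\ell+1} x^{1-\alpha}\,dx = \frac{1}{2-\alpha}\left[(\ell+1)^{2-\alpha} - \ell^{2-\alpha}\right]
\end{equation*}
holds for every $\ell \geq 0$.

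For part (i), observe that $a_\ell^{(\alpha)} = f(\ell+1) - f(\ell)$, so $a_\ell^{(\alpha)} > 0$ is immediate from the strict monotonicity of $f$. For the decreasing property I would pass to the continuous interpolant $A(t) = (t+1)^{1-\alpha} - t^{1-\alpha}$, so that $a_\ell^{(\alpha)} = A(\ell)$, and compute $A'(t) = (1-\alpha)\left[(t+1)^{-\alpha} - t^{-\alpha}\right] < 0$ for $t > 0$ since $x \mapsto x^{-\alpha}$ is strictly decreasing. As $A$ is continuous on $[0,\infty)$ and strictly decreasing on $(0,\infty)$, the chain $a_0^{(\alpha)} > a_1^{(\alpha)} > \cdots$ follows.

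For part (ii), the key observation is that the integral identity above lets me rewrite
\begin{equation*}
b_\ell^{(\alpha)} = \int_{\ell}^{\ell+1} x^{1-\alpha}\,dx - \frac{1}{2}\left[(\ell+1)^{1-\alpha} + \ell^{1-\alpha}\right],
\end{equation*}
which is precisely the error of the trapezoidal rule applied to $f$ on $[\ell,\ell+1]$. For $\ell \geq 1$ the mean-value form of that remainder gives $b_\ell^{(\alpha)} = -\tfrac{1}{12}f''(\xi_\ell) = \tfrac{\alpha(1-\alpha)}{12}\,\xi_\ell^{-1-\alpha} > 0$ for some $\xi_\ell \in (\ell,\ell+1)$, while the case $\ell = 0$ I would settle by the explicit value $b_0^{(\alpha)} = \tfrac{1}{2-\alpha} - \tfrac{1}{2} > 0$, valid because $2-\alpha \in (1,2)$. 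For the decreasing property I would again introduce the interpolant $B(t) = \int_{t}^{t+1} x^{1-\alpha}\,dx - \tfrac{1}{2}\left[(t+1)^{1-\alpha} + t^{1-\alpha}\right]$ and differentiate; the pleasant point is that
\begin{equation*}
B'(t) = \left[(t+1)^{1-\alpha} - t^{1-\alpha}\right] - \frac{1-\alpha}{2}\left[(t+1)^{-\alpha} + t^{-\alpha}\right]
\end{equation*}
is itself the trapezoidal remainder for $g(x) = (1-\alpha)x^{-\alpha}$ on $[t,t+1]$, so $B'(t) = -\tfrac{1}{12}g''(\eta_t) < 0$ because $g''(x) = \alpha(1-\alpha)(1+\alpha)x^{-\alpha-2} > 0$. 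Continuity of $B$ at $0$ together with $B' < 0$ on $(0,\infty)$ then yields $b_0^{(\alpha)} > b_1^{(\alpha)} > \cdots$.

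The main obstacle is the endpoint $\ell = 0$: the kernel $f''$ (and, in the monotonicity step, $g''$) is singular at the origin, so the trapezoidal error formula cannot be quoted verbatim on $[0,1]$. I would circumvent this by isolating $\ell = 0$ through the explicit value of $b_0^{(\alpha)}$ and by basing the monotonicity purely on the sign of $B'$ over the open interval $(0,\infty)$, invoking continuity of $B$ at $t = 0$ to secure the first gap $b_0^{(\alpha)} > b_1^{(\alpha)}$. With the two quadrature-remainder identities in hand, everything else reduces to a routine sign chase.
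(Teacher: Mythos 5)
Your proof is correct. Note, however, that the paper itself contains no proof of this lemma: it is imported verbatim from the cited reference \cite{gao2014new}, so your argument supplies what the paper leaves to the literature. The standard route there is to apply the mean value theorem directly to the differences, writing $a_\ell^{(\alpha)} = f'(\xi_\ell)$ with $f(x)=x^{1-\alpha}$, $\xi_\ell \in (\ell,\ell+1)$, and using the monotonicity of $f'$, with a similar (but more laborious) elementary treatment of $b_\ell^{(\alpha)}$. Your reading of $b_\ell^{(\alpha)}$ as a trapezoidal-rule remainder is a genuinely different and tidier organization: one identity delivers positivity, and differentiating the continuous interpolant $B(t)$ turns the monotonicity claim into a second quadrature-remainder sign check, so both halves of (ii) follow from the same mechanism. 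Your care at the singular panel $\ell=0$ (explicit value of $b_0^{(\alpha)}$ plus continuity of $B$ at $t=0$) is exactly the right fix, since the $C^2$ error formula is unavailable on $[0,1]$. One small simplification you could fold in: positivity of $b_\ell^{(\alpha)}$ for \emph{every} $\ell \geq 0$, including $\ell = 0$, follows at once from strict concavity of $f$ — the chord lies strictly below the graph, so $\int_\ell^{\ell+1} f(x)\,dx > \tfrac{1}{2}\left[f(\ell)+f(\ell+1)\right]$ — which needs no two-sided $C^2$ bound and would let you drop the explicit computation of $b_0^{(\alpha)}$, leaving the endpoint care only where it is truly needed, namely in the monotonicity step.
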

\begin{lemma}\label{lemma3.2}
For $\alpha \in (0, 0.3624)$, we have
\begin{itemize}
	\item[(i)] {$c_k^{(\alpha)} > 0$ and $c_{k + 1}^{(\alpha)} - c_k^{(\alpha)} < 0$ for $k = 0,1,\ldots$;} 
	\item[(ii)] {$\tilde{c}_k^{(\alpha)} > 0$ and 
		$\tilde{c}_{k + 1}^{(\alpha)} - c_k^{(\alpha)} < 0$ for $k = 0,1,\ldots$.}
\end{itemize}
\end{lemma}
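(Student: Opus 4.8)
The plan is to first collapse the four assertions into two genuinely independent ones, and then to attack those via an integral representation of $b_k^{(\alpha)}$. Observe directly from the definitions that $\tilde{c}_k^{(\alpha)} = c_k^{(\alpha)} + b_{k+1}^{(\alpha)}$ for every $k \ge 0$, and consequently $\tilde{c}_{k+1}^{(\alpha)} - c_k^{(\alpha)} = (c_{k+1}^{(\alpha)} - c_k^{(\alpha)}) + b_{k+2}^{(\alpha)}$. Since $b_\ell^{(\alpha)} > 0$ by Lemma~\ref{lemma3.1}(ii), the positivity $\tilde{c}_k^{(\alpha)} > 0$ in (ii) is immediate from $c_k^{(\alpha)} > 0$ in (i), and the monotonicity $c_{k+1}^{(\alpha)} - c_k^{(\alpha)} < 0$ in (i) is immediate from $\tilde{c}_{k+1}^{(\alpha)} - c_k^{(\alpha)} < 0$ in (ii), because $c_{k+1}^{(\alpha)} - c_k^{(\alpha)} = (\tilde{c}_{k+1}^{(\alpha)} - c_k^{(\alpha)}) - b_{k+2}^{(\alpha)} < \tilde{c}_{k+1}^{(\alpha)} - c_k^{(\alpha)}$. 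Hence only two statements remain: the positivity $c_k^{(\alpha)} > 0$ and the stronger inequality $\tilde{c}_{k+1}^{(\alpha)} - c_k^{(\alpha)} < 0$, and this is where all the effort, and the threshold $0.3624$, must go.

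For the analysis I would set $f(x) = x^{1-\alpha}$ and record two exact representations: $a_k^{(\alpha)} = f(k+1) - f(k)$, and $b_k^{(\alpha)} = \int_k^{k+1} f(x)\,dx - \tfrac12\big(f(k) + f(k+1)\big)$, so that $b_k^{(\alpha)}$ is precisely the trapezoidal-rule error on $[k,k+1]$, whence $b_k^{(\alpha)} = -\tfrac{1}{12} f''(\xi_k) > 0$ because $f$ is concave. Substituting into $c_k^{(\alpha)} = a_k^{(\alpha)} + b_k^{(\alpha)} - b_{k-1}^{(\alpha)}$ ($k \ge 1$) and simplifying gives $c_k^{(\alpha)} = \tfrac12\big[f(k+1) - 2f(k) + f(k-1)\big] + \big[\int_k^{k+1} f - \int_{k-1}^k f\big]$, a negative second-difference term plus a positive increment of the integral. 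Comparing the mean-value forms of the two pieces ($\tfrac12 f''$ versus $f'$ at interior points) and using the monotonicity of $f'$ and $f''$, the positive term dominates for every $k \ge 1$; together with $c_0^{(\alpha)} = a_0^{(\alpha)} + b_0^{(\alpha)} > 0$ this proves positivity throughout the admissible range (indeed on a strictly larger $\alpha$-interval), so (i)'s positivity does \emph{not} determine the threshold.

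The crux is $\tilde{c}_{k+1}^{(\alpha)} - c_k^{(\alpha)} < 0$. Through the representations it reads, for $k \ge 1$, $\tilde{c}_{k+1}^{(\alpha)} - c_k^{(\alpha)} = (a_{k+1}^{(\alpha)} - a_k^{(\alpha)}) + (b_{k+1}^{(\alpha)} - 2b_k^{(\alpha)} + b_{k-1}^{(\alpha)}) + b_{k+2}^{(\alpha)}$, a delicate combination of a negative first difference of $a$, a tiny second difference of $b$, and a positive $b$-term, whose near-cancellation rules out crude bounds. I would argue two things. First, an asymptotic expansion ($a_{k+1} - a_k \sim f''(k)$, the $b$-terms of order $f''$) yields $\tilde{c}_{k+1}^{(\alpha)} - c_k^{(\alpha)} \sim \tfrac{11}{12}(1-\alpha)(-\alpha) k^{-1-\alpha} < 0$, so the inequality is strict and improves as $k \to \infty$. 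Second, the worst (least negative) index is $k=1$: the cases $k=0$ and $k \ge 2$ are strictly negative for all admissible $\alpha$, while $k=1$, i.e. $\tilde{c}_2^{(\alpha)} - c_1^{(\alpha)} < 0$, reduces after inserting $a_0,a_1,a_2$ and $b_0,b_1,b_2,b_3$ to an explicit transcendental inequality in $\alpha$ (involving $2^{1-\alpha}, 3^{1-\alpha}, 4^{1-\alpha}$ and the factor $1/(2-\alpha)$) that holds exactly for $\alpha$ below the root $\alpha^\ast \approx 0.3624$ of the associated equation.

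The main obstacle is precisely this last step: showing $k=1$ is the binding index and extracting the threshold. The sequence $\tilde{c}_{k+1}^{(\alpha)} - c_k^{(\alpha)}$ is \emph{not} monotone in $k$ — it is strongly negative at $k=0$, rises to nearly zero at $k=1$, dips again near $k=2$, then increases back to $0^-$ — so one must genuinely compare $k=1$ against both its neighbours and the tail rather than invoke a single monotonicity argument. Moreover $\alpha^\ast$ is available only as the numerical root of a transcendental equation, so the proof can at best exhibit an explicit one-variable function, verify by a short calculus argument that it is increasing in $\alpha$ and vanishes at $\alpha^\ast$, and dispose of $k=0$ and $k \ge 2$ by the dominance and asymptotic estimates above. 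I would therefore isolate the $k=1$ case as a scalar lemma in $\alpha$ alone and build the rest of the argument around it.
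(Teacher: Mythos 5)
Your proposal is correct in substance, and it takes a genuinely different route from the paper. The paper's proof is essentially a patchwork of citations plus numerical evidence: it imports $c_k^{(\alpha)}>0$ and the monotonicity for $k=0$ and $k\geq 2$ from the proof of Lemma 2.2 in \cite{gao2014new}, imports $\tilde{c}_{k+1}^{(\alpha)}-c_k^{(\alpha)}<0$ for $k\geq 2$ from the proof of Lemma 2.3 in \cite{alikhanov2021high}, verifies $\tilde{c}_{1}^{(\alpha)}-c_0^{(\alpha)}<0$ directly from Lemma~\ref{lemma3.1} (exactly as you do), and---crucially---settles both $k=1$ inequalities, $c_2^{(\alpha)}-c_1^{(\alpha)}<0$ and $\tilde{c}_2^{(\alpha)}-c_1^{(\alpha)}<0$, only by inspecting the plots in Fig.~\ref{fig1}. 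Your reduction of the four assertions to two via $\tilde{c}_k^{(\alpha)}=c_k^{(\alpha)}+b_{k+1}^{(\alpha)}$ is cleaner than the paper's treatment (the paper never observes that (ii) implies the nontrivial half of (i)); your trapezoidal-error reading of $b_k^{(\alpha)}$, the identity $\tilde{c}_{k+1}^{(\alpha)}-c_k^{(\alpha)}=(a_{k+1}^{(\alpha)}-a_k^{(\alpha)})+(b_{k+1}^{(\alpha)}-2b_k^{(\alpha)}+b_{k-1}^{(\alpha)})+b_{k+2}^{(\alpha)}$, and the asymptotic $\tfrac{11}{12}f''(k)$ are all correct; and you correctly locate the source of the threshold $0.3624$ in the $k=1$ case---precisely the case the paper can only handle graphically. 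If carried out, your program yields a self-contained analytic proof; the paper's version is shorter but transfers all the real work to references and a figure.

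Two caveats on your plan. First, the mean-value domination argument for $c_k^{(\alpha)}>0$ fails at $k=1$: your comparison bounds $\tfrac12|f''|$ on $(k-1,k+1)$ against $f'$, and $f''$ is unbounded near $0$. That case needs the direct evaluation $c_1^{(\alpha)}=\tfrac12\left(2^{1-\alpha}-2\right)+\tfrac{1}{2-\alpha}\left(2^{2-\alpha}-2\right)$, which is indeed positive on $(0,0.3624)$ and somewhat beyond, as you claim (though it turns negative as $\alpha\to 1$, so positivity is not free of restriction either). Second, the two load-bearing steps---uniform negativity for every finite $k\geq 2$ (asymptotics alone do not cover finite $k$) and the scalar transcendental inequality at $k=1$ whose root is $\alpha^\ast\approx 0.3624$---remain unexecuted sketches in your plan; to be fair, the paper does not execute them either, so completing them as you outline would produce an argument strictly more rigorous than the published one.
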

\begin{proof}
According to the proof of Lemma 2.2 in \cite{gao2014new}, we obtain that
for $\alpha \in (0, 0.3624)$,
$c_k^{(\alpha)} > 0$ ($k = 0,1,\ldots$), $c_1^{(\alpha)} - c_0^{(\alpha)} < 0$ and
$c_{k + 1}^{(\alpha)} - c_k^{(\alpha)} < 0$ ($k = 2,3,\ldots$).
From Fig.~\ref{fig1}(a), we have $c_{2}^{(\alpha)} - c_1^{(\alpha)} < 0$ for $\alpha \in (0, 0.3624)$.
Thus, the first property is true.

It is easy to check that $\tilde{c}_k^{(\alpha)} > 0$ ($k = 0,1,\ldots$) since 
$\tilde{c}_k^{(\alpha)} = c_k^{(\alpha)} + b_{k + 1}^{(\alpha)}$.
Now, we remain to show $\tilde{c}_{k + 1}^{(\alpha)} - c_k^{(\alpha)} < 0$.
Using Lemma \ref{lemma3.1}, we have
\begin{equation*}
\tilde{c}_{1}^{(\alpha)} - c_0^{(\alpha)} 
= a_1^{(\alpha)} - a_0^{(\alpha)} + b_1^{(\alpha)} - b_0^{(\alpha)}
+ b_2^{(\alpha)} - b_0^{(\alpha)} < 0.
\end{equation*}
Based on the proof of Lemma 2.3 in \cite{alikhanov2021high}, we get
$\tilde{c}_{k + 1}^{(\alpha)} - c_k^{(\alpha)} < 0$ ($k = 2,3,\ldots$).
Clearly, we can see from Fig.~\ref{fig1}(b) that for $\alpha \in (0, 0.3624)$, 
$\tilde{c}_{2}^{(\alpha)} - c_1^{(\alpha)} < 0$.
The proof is completed.
\begin{figure}[ht]
	\centering
	\subfigure[$c_{2}^{(\alpha)} - c_1^{(\alpha)}$]
	{\includegraphics[width=2.4in,height=2.23in]{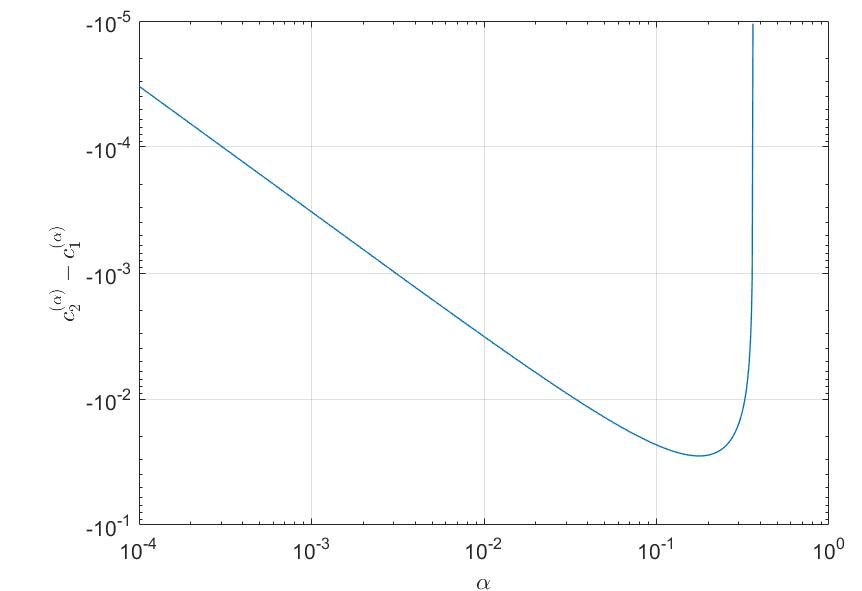}}
	\subfigure[$\tilde{c}_{2}^{(\alpha)} - c_1^{(\alpha)}$]
	{\includegraphics[width=2.4in,height=2.23in]{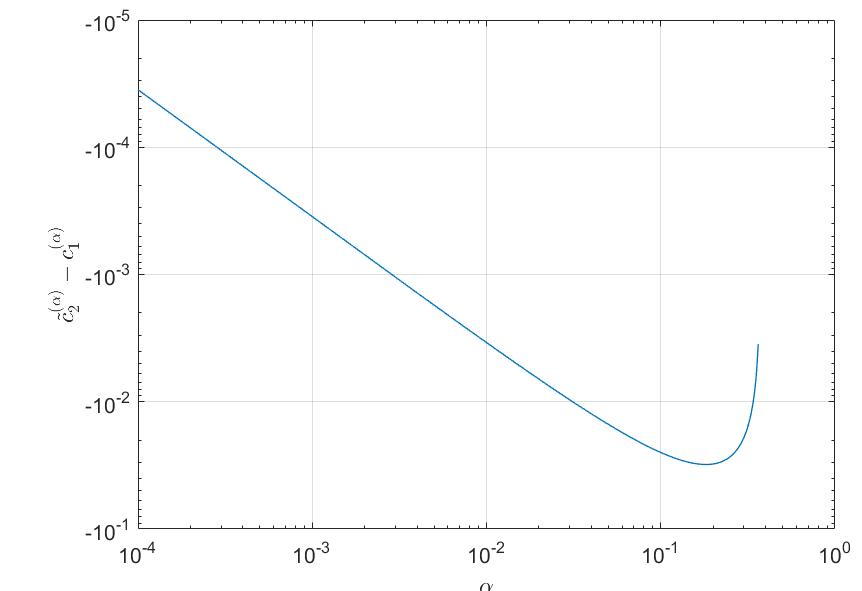}}
	\caption{The values of $c_{2}^{(\alpha)} - c_1^{(\alpha)}$ and $\tilde{c}_{2}^{(\alpha)} - c_1^{(\alpha)}$ 
		for $\alpha \in (0, 0.3624)$.}
	\label{fig1}
\end{figure}
\end{proof}

With these at hand, we get the following result.
\begin{theorem}\label{th3.1}
For any $\alpha \in (0, 0.3624)$, the matrix $A_t + A_t^T$ is positive definite.
\end{theorem}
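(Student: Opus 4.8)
The plan is to show that the real symmetric matrix $A_t + A_t^T$ is strictly diagonally dominant with positive diagonal entries; by the Gershgorin disc theorem every eigenvalue then lies in a disc centred at a positive number with radius strictly smaller than that number, which forces all (necessarily real) eigenvalues to be positive, i.e.\ positive definiteness. Since every entry of $A_t$ carries the common positive factor $\gamma := h^{\beta}\tau_t^{-\alpha}/\Gamma(2-\alpha)$, I would first divide it out and argue with the normalised matrix, whose diagonal equals $\tilde c_0^{(\alpha)}$ in the first slot and $c_0^{(\alpha)}$ elsewhere, whose first column below the diagonal consists of the entries $\tilde c_{j-1}^{(\alpha)}-c_{j-2}^{(\alpha)}$, and whose remaining strictly lower part follows the Toeplitz pattern $c_{j-k}^{(\alpha)}-c_{j-k-1}^{(\alpha)}$. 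By Lemma~\ref{lemma3.2} every one of these off-diagonal entries is \emph{negative}, and this sign structure is precisely what makes the dominance estimate tractable.

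Because $A_t$ is lower triangular, for $j\neq k$ the $(j,k)$ entry of $A_t+A_t^{T}$ is just the corresponding lower-triangular entry of $A_t$ (reflected into the upper triangle), so the absolute off-diagonal sum in row $j$ equals $-1$ times the sum of the subdiagonal entries of $A_t$ in row $j$ together with the subdiagonal entries of $A_t$ in column $j$. For a generic row $2\le j\le M-1$ both pieces telescope: the horizontal piece collapses through $\sum(c_{\ell}^{(\alpha)}-c_{\ell-1}^{(\alpha)})$ and, after adding the first-column term $\tilde c_{j-1}^{(\alpha)}-c_{j-2}^{(\alpha)}$, leaves $\tilde c_{j-1}^{(\alpha)}-c_0^{(\alpha)}$, while the vertical (column-$j$) piece telescopes to $c_{M-1-j}^{(\alpha)}-c_0^{(\alpha)}$. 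The absolute off-diagonal row sum is therefore $2c_0^{(\alpha)}-\tilde c_{j-1}^{(\alpha)}-c_{M-1-j}^{(\alpha)}$, strictly less than the diagonal value $2c_0^{(\alpha)}$ exactly because $\tilde c_{j-1}^{(\alpha)}+c_{M-1-j}^{(\alpha)}>0$ by Lemma~\ref{lemma3.2}. Hence every row with $j\ge 2$ is strictly dominant.

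The remaining obstacle, and the only row that escapes the clean telescoping above, is the first row, whose off-diagonal entries form the column $\tilde c_{\ell}^{(\alpha)}-c_{\ell-1}^{(\alpha)}$ rather than pure $c$-differences, so the sum does not collapse to $c_0^{(\alpha)}$. To close this case I would invoke the identity $\tilde c_{k}^{(\alpha)}=c_{k}^{(\alpha)}+b_{k+1}^{(\alpha)}$ recorded in the proof of Lemma~\ref{lemma3.2}: writing each term as $(c_{\ell-1}^{(\alpha)}-c_{\ell}^{(\alpha)})-b_{\ell+1}^{(\alpha)}$, the $c$-part telescopes to $c_0^{(\alpha)}-c_{M-2}^{(\alpha)}$ and the $b$-part contributes $\sum_{\ell=2}^{M-1}b_{\ell}^{(\alpha)}$, so the absolute off-diagonal sum of the first row becomes $c_0^{(\alpha)}-c_{M-2}^{(\alpha)}-\sum_{\ell=2}^{M-1}b_{\ell}^{(\alpha)}$. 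Comparing with the first diagonal entry $2\tilde c_0^{(\alpha)}=2c_0^{(\alpha)}+2b_1^{(\alpha)}$, the dominance gap equals $c_0^{(\alpha)}+2b_1^{(\alpha)}+c_{M-2}^{(\alpha)}+\sum_{\ell=2}^{M-1}b_{\ell}^{(\alpha)}$, a sum of strictly positive quantities by Lemmas~\ref{lemma3.1}--\ref{lemma3.2}.

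With strict diagonal dominance established for every row and all diagonal entries positive, restoring the factor $\gamma>0$ and invoking the Gershgorin argument completes the proof. The one place demanding genuine care is the first row, where the $\tilde c$-weighting breaks the telescoping and the positivity of the auxiliary coefficients $b_{\ell}^{(\alpha)}$ must be used explicitly; every other row reduces to the single inequality $\tilde c_{j-1}^{(\alpha)}+c_{M-1-j}^{(\alpha)}>0$.
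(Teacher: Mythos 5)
Your proposal is correct and follows essentially the same route as the paper: both apply the Gershgorin circle theorem (equivalently, strict diagonal dominance of the symmetric matrix $A_t + A_t^T$), telescope the off-diagonal sums using the sign structure from Lemma \ref{lemma3.2} to get the radius $2c_0^{(\alpha)} - \tilde{c}_{j-1}^{(\alpha)} - c_{M-1-j}^{(\alpha)}$ for rows $j \geq 2$, and handle the first row separately via the identity $\tilde{c}_k^{(\alpha)} = c_k^{(\alpha)} + b_{k+1}^{(\alpha)}$ together with the positivity of the $b_\ell^{(\alpha)}$. The only cosmetic difference is that you evaluate the first-row sum exactly, where the paper reaches the same conclusion through a short chain of inequalities.
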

\begin{proof}
According to Lemma \ref{lemma3.2} and Gershgorin circle theorem \cite{varga2010gervsgorin}, 
the first Gershgorin disc of the matrix $A_t + A_t^T$ is centered 
at $\frac{2 h^{\beta} \tau_t^{- \alpha}}{\Gamma(2 - \alpha)} \tilde{c}_{0}^{(\alpha)} > 0$ with radius 
\begin{equation*}
\begin{split}
R_1 & = \frac{h^{\beta} \tau_t^{- \alpha}}{\Gamma(2 - \alpha)} \sum_{k = 1}^{M - 2} 
\left| \tilde{c}_{k}^{(\alpha)} - c_{k - 1}^{(\alpha)} \right|
= \frac{h^{\beta} \tau_t^{- \alpha}}{\Gamma(2 - \alpha)} 
\left( c_{0}^{(\alpha)} + \sum_{k = 1}^{M - 3} c_{k}^{(\alpha)} - \sum_{k = 1}^{M - 2} \tilde{c}_{k}^{(\alpha)} \right) \\
& < \frac{h^{\beta} \tau_t^{- \alpha}}{\Gamma(2 - \alpha)} 
\left[ c_{0}^{(\alpha)} + \sum_{k = 1}^{M - 2} \left( c_{k}^{(\alpha)} - \tilde{c}_{k}^{(\alpha)} \right) \right]
= \frac{h^{\beta} \tau_t^{- \alpha}}{\Gamma(2 - \alpha)} 
\left( c_{0}^{(\alpha)} - \sum_{k = 2}^{M - 1} b_{k}^{(\alpha)} \right)
< \frac{2 h^{\beta} \tau_t^{- \alpha}}{\Gamma(2 - \alpha)} \tilde{c}_{0}^{(\alpha)}.
\end{split}
\end{equation*}
The $j$th ($2 \leq j \leq M - 1$) Gershgorin disc is centered 
at $\frac{2 h^{\beta} \tau_t^{- \alpha}}{\Gamma(2 - \alpha)}  c_{0}^{(\alpha)} > 0$ with radius 
\begin{equation*}
\begin{split}
R_j & = \frac{h^{\beta} \tau_t^{- \alpha}}{\Gamma(2 - \alpha)} 
\left( \left| \tilde{c}_{j - 1}^{(\alpha)} - c_{j - 2}^{(\alpha)} \right|
+ \sum_{k = 1}^{j - 2} \left| c_{k}^{(\alpha)} - c_{k - 1}^{(\alpha)} \right| 
+ \sum_{k = 1}^{M - j - 1} \left| c_{k}^{(\alpha)} - c_{k - 1}^{(\alpha)} \right| \right) \\
& = \frac{h^{\beta} \tau_t^{- \alpha}}{\Gamma(2 - \alpha)}  
\left[ c_{j - 2}^{(\alpha)} - \tilde{c}_{j - 1}^{(\alpha)} 
+ \sum_{k = 1}^{j - 2} \left( c_{k - 1}^{(\alpha)} - c_{k}^{(\alpha)} \right)
+ \sum_{k = 1}^{M - j - 1} \left( c_{k - 1}^{(\alpha)} - c_{k}^{(\alpha)} \right) \right]\\
& = \frac{h^{\beta} \tau_t^{- \alpha}}{\Gamma(2 - \alpha)}  
\left( 2 c_{0}^{(\alpha)} - \tilde{c}_{j - 1}^{(\alpha)} - c_{M - j - 1}^{(\alpha)} \right)
< \frac{2 h^{\beta} \tau_t^{- \alpha}}{\Gamma(2 - \alpha)}  c_{0}^{(\alpha)}.
\end{split}
\end{equation*}
This implies that all eigenvalues of $A_t + A_t^T$ are positive.
Thus, the matrix $A_t + A_t^T$ is positive definite.
\end{proof}

We review some properties of $g_k^{\beta}$. It will be used later.
\begin{lemma}(\cite{zhang2020exponential})\label{lemma3.3}
	Suppose $\beta \in (1,2)$, we have
	\begin{itemize}
		\item[(i)] {$g_0^{\beta} \geq 0$ and $g_k^{\beta} = g_{-k}^{\beta}$ for $|k| \geq 1$;} 
		\item[(ii)] {$g_0^{\beta} + 2 \sum\limits_{k = 1}^{N - 2} g_k^{\beta} > 0$.}
	\end{itemize}
\end{lemma}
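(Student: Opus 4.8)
The plan is to treat the two assertions separately: part (i) comes straight from the closed form of $g_k^\beta$, while part (ii) I would reduce to two facts about the coefficients, one an elementary sign pattern and one a generating-function identity. For part (i), I read off both claims from $g_k^\beta = \frac{(-1)^k \Gamma(1+\beta)}{\Gamma(\beta/2 - k + 1)\Gamma(\beta/2 + k + 1)}$. Taking $k = 0$ gives $g_0^\beta = \Gamma(1+\beta)/[\Gamma(\beta/2 + 1)]^2$; since $\beta \in (1,2)$ the arguments $1+\beta$ and $\beta/2+1$ are positive, so the Gamma values are positive and $g_0^\beta > 0$ (in particular $\geq 0$). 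For the symmetry, replacing $k$ by $-k$ leaves $(-1)^{-k} = (-1)^k$ unchanged and merely interchanges the two denominator factors $\Gamma(\beta/2 - k + 1)$ and $\Gamma(\beta/2 + k + 1)$, whence $g_{-k}^\beta = g_k^\beta$.

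For part (ii), I isolate two ingredients: (a) $g_k^\beta < 0$ for every $k \geq 1$, and (b) the full bilateral series vanishes, $g_0^\beta + 2\sum_{k=1}^\infty g_k^\beta = 0$. Ingredient (a) follows from the one-step recurrence $g_{k+1}^\beta = \frac{k - \beta/2}{k + \beta/2 + 1}\, g_k^\beta$, obtained by applying $\Gamma(x+1) = x\Gamma(x)$ to the two denominator factors. For $k \geq 1$ and $\beta \in (1,2)$ both $k - \beta/2 > 0$ and $k + \beta/2 + 1 > 0$, so this ratio is positive and all $g_k^\beta$ with $k \geq 1$ share the sign of $g_1^\beta = -\Gamma(1+\beta)/[\Gamma(\beta/2)\Gamma(\beta/2 + 2)] < 0$; hence $g_k^\beta < 0$ for all $k \geq 1$.

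Ingredient (b) is the crux. Here I would invoke the symbol identity for the fractional centered difference, namely that the $g_k^\beta$ are the Fourier coefficients of $(2 - 2\cos\theta)^{\beta/2} = (2\sin(\theta/2))^\beta$, so that $\sum_{k=-\infty}^\infty g_k^\beta e^{\mathrm{\mathbf{i}} k\theta} = (2 - 2\cos\theta)^{\beta/2}$ on $[-\pi,\pi]$; the asymptotics $g_k^\beta = O(k^{-1-\beta})$ read off from the same recurrence guarantee absolute convergence for $\beta > 1$. Evaluating this identity at $\theta = 0$ gives $\sum_{k=-\infty}^\infty g_k^\beta = 0$, which by the symmetry of part (i) is precisely $g_0^\beta + 2\sum_{k=1}^\infty g_k^\beta = 0$.

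Finally I combine the ingredients by subtracting the vanishing full series from the truncated one, obtaining $g_0^\beta + 2\sum_{k=1}^{N-2} g_k^\beta = -2\sum_{k=N-1}^\infty g_k^\beta$. Since every term with index $k \geq N - 1 \geq 1$ is strictly negative by (a), the tail is a negative convergent number and its negative is strictly positive, proving (ii). The main obstacle is establishing the zero-sum identity (b): the recurrence, the sign pattern, and the final telescoping are routine, but the vanishing of the bilateral sum rests on the Fourier-coefficient characterization of $(2\sin(\theta/2))^\beta$, which I would either verify directly or cite from the fractional-centered-difference literature (e.g.\ \cite{ccelik2012crank,zhang2020exponential}); one must also confirm the tail converges so that the rearrangement subtracting the full series is legitimate.
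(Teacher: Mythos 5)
The paper offers no proof of this lemma---it is quoted directly from \cite{zhang2020exponential}---so the only meaningful comparison is with the standard derivation in that literature, and your argument is exactly that derivation, carried out correctly: part (i) read off the closed form of $g_k^{\beta}$, the sign pattern $g_k^{\beta}<0$ for $k\geq 1$ via the ratio $g_{k+1}^{\beta}/g_k^{\beta}=(k-\beta/2)/(k+\beta/2+1)$, and part (ii) by combining the vanishing of the bilateral sum (the symbol $(2\sin(\theta/2))^{\beta}$ evaluated at $\theta=0$) with strict negativity of the discarded tail. All steps, including the absolute-convergence justification for splitting off the tail, are sound.
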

\begin{theorem}\label{th3.2}
For any $\alpha \in (0,1)$ and $\beta \in (1,2)$, the matrices $P_{l}$ and $P_{r}$ are nonsingular.
\end{theorem}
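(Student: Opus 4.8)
The plan is to reduce the nonsingularity of both preconditioners to two positivity statements: that the $\tau$-matrix $G_{\tau}$ is symmetric positive definite (which simultaneously makes the fractional powers $(\kappa\,G_{\tau})^{\pm 1/2}$ well defined and handles $P_{r}$), and that every block $\varSigma_{i}$ appearing in the sine-transform block-diagonalization of $P_{l}$ is invertible. The main obstacle is the first statement; the rest is routine.

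First I would establish that $G_{\tau}$ is positive definite. Since $G_{\tau}$ is a $\tau$-matrix, it is diagonalized by the orthogonal sine-transform matrix $Q_{x}$ as $G_{\tau}=Q_{x}^{T}D_{\tau}Q_{x}$, and by the explicit spectral formula for $\tau$-matrices \cite{bini1990new} its eigenvalues are
\[
\lambda_{\tau,j}=g_{0}^{\beta}+2\sum_{k=1}^{N-2}g_{k}^{\beta}\cos\frac{jk\pi}{N},\qquad j=1,2,\ldots,N-1.
\]
Because $g_{k}^{\beta}\le 0$ for $k\ge 1$ (which is readily checked from the closed form of $g_{k}^{\beta}$ for $\beta\in(1,2)$) and $\cos\frac{jk\pi}{N}\le 1$, each summand obeys $g_{k}^{\beta}\cos\frac{jk\pi}{N}\ge g_{k}^{\beta}$, whence
\[
\lambda_{\tau,j}\ \ge\ g_{0}^{\beta}+2\sum_{k=1}^{N-2}g_{k}^{\beta}\ >\ 0
\]
by Lemma \ref{lemma3.3}(ii). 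Thus $D_{\tau}$ has strictly positive diagonal, $G_{\tau}$ is symmetric positive definite, and the powers $(\kappa\,G_{\tau})^{\pm 1/2}=Q_{x}^{T}(\kappa\,D_{\tau})^{\pm 1/2}Q_{x}$ are well defined and nonsingular. In particular $P_{r}=(\kappa\,G_{\tau})^{1/2}\otimes I_{t}$ is a Kronecker product of two nonsingular matrices, hence nonsingular. Correctly pairing the $\tau$-matrix spectral formula with Lemma \ref{lemma3.3} to obtain this uniform lower bound on $\lambda_{\tau,j}$ is the delicate point; everything else follows formally.

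For $P_{l}$ I would invoke the block-diagonalization derived in Section \ref{sec3.1},
\[
P_{l}=(Q_{x}^{T}\otimes I_{t})\,\mathrm{blkdiag}(\varSigma_{1},\ldots,\varSigma_{N-1})\,(Q_{x}\otimes I_{t}),\qquad \varSigma_{i}=(\kappa\,\lambda_{\tau,i})^{\frac12}I_{t}+(\kappa\,\lambda_{\tau,i})^{-\frac12}A_{t}.
\]
Since $Q_{x}\otimes I_{t}$ is orthogonal, $P_{l}$ is nonsingular if and only if each $\varSigma_{i}$ is. Writing $\mu_{i}=(\kappa\,\lambda_{\tau,i})^{1/2}>0$ (real and positive by the previous paragraph), we have $\varSigma_{i}=\mu_{i}I_{t}+\mu_{i}^{-1}A_{t}$, whose eigenvalues are exactly $\mu_{i}+\mu_{i}^{-1}\nu$ as $\nu$ ranges over the spectrum of $A_{t}$.

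Finally I would show that every eigenvalue of $A_{t}$ is positive. As $A_{t}$ is block lower triangular with diagonal blocks $A_{11}$ and $A_{22}$, its spectrum is the union of their spectra. The scalar block equals $\frac{h^{\beta}\tau_{t}^{-\alpha}}{\Gamma(2-\alpha)}\tilde{c}_{0}^{(\alpha)}$, and $A_{22}$ is lower triangular Toeplitz with constant diagonal $\frac{h^{\beta}\tau_{t}^{-\alpha}}{\Gamma(2-\alpha)}c_{0}^{(\alpha)}$; by Lemma \ref{lemma3.1} we have $a_{0}^{(\alpha)}=1$ and $b_{k}^{(\alpha)}>0$, so $\tilde{c}_{0}^{(\alpha)}=a_{0}^{(\alpha)}+b_{0}^{(\alpha)}+b_{1}^{(\alpha)}>0$ and $c_{0}^{(\alpha)}=a_{0}^{(\alpha)}+b_{0}^{(\alpha)}>0$ for all $\alpha\in(0,1)$. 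Hence every eigenvalue $\nu$ of $A_{t}$ is positive, so $\mu_{i}+\mu_{i}^{-1}\nu>0$, each $\varSigma_{i}$ has only positive eigenvalues and is nonsingular, and therefore $P_{l}$ is nonsingular. I would remark that this last step uses only Lemma \ref{lemma3.1}, which is valid on the full range $\alpha\in(0,1)$, consistent with the theorem being stated for $\alpha\in(0,1)$ rather than the restricted interval needed in Theorem \ref{th3.1}.
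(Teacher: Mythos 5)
Your proposal is correct and follows essentially the same route as the paper: positivity of the $\tau$-matrix eigenvalues $\lambda_{\tau,i}$ yields nonsingularity of $P_{r}$, and the triangular structure of the sine-transformed $P_{l}$ (equivalently, of each block $\varSigma_{i}$) together with the positivity of the diagonal entries $\tilde{c}_{0}^{(\alpha)},\,c_{0}^{(\alpha)}$ of $A_{t}$ yields nonsingularity of $P_{l}$. The only difference is cosmetic: where the paper obtains $\lambda_{\tau,i}>0$ by citing Lemma 3.2 of \cite{huang2021spectral} together with Lemma \ref{lemma3.3}, you derive it directly from the $\tau$-matrix spectral formula and the sign property $g_{k}^{\beta}\leq 0$ for $k\geq 1$, making the argument self-contained.
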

\begin{proof}
Combining Lemma 3.2 in \cite{huang2021spectral} and Lemma \ref{lemma3.3}, 
we have $\lambda_{\tau,i} > 0$ for $1 \leq i \leq N - 1$.
Then, $P_{r}$ is nonsingular.
Noticing that $\left( \kappa\; D_{\tau} \right)^{\frac{1}{2}} \otimes I_t 
+ \left( \kappa\; D_{\tau} \right)^{-\frac{1}{2}} \otimes A_t$ is a lower triangular matrix 
with its all diagonal entries are positive.
Thus, $P_{l}$ is nonsingular and the proof is completed.
\end{proof}

The proof of Theorem \ref{th3.2} implies that the matrix $G_{\tau}$ is symmetric positive definite.
We turn to show the boundedness of the spectrum of $G_{\tau}^{-1} G_{\beta}$.
Let $h_{ij}$ and $\tilde{g}_{ij}$ be the entries of $H_{\beta}$ and $G_{\tau}$, respectively. 
Then, we get
\begin{equation*}
h_{ij} = 
\begin{cases}
g_{i + j}^{\beta}, & i + j < N - 2, \\
0, & i + j = N - 2, N - 1, N, \\ 
g_{2 N - (i + j)}^{\beta}, & \mathrm{others},
\end{cases}
\end{equation*}
and $\tilde{g}_{ij} = g_{| i - j |}^{\beta} - h_{ij}$.
Similar to \cite[Lemma 4.2]{huang2021spectral}, we know that
\begin{equation}\label{eq3.6}
p_{ii} > 0~\mathrm{and}~p_{ij} < 0 \quad \mathrm{for}~1 \leq i,j \leq N - 1, i \neq j.
\end{equation}
\begin{theorem}\label{th3.3}
The spectrum of $G_{\tau}^{-1} G_{\beta}$ is uniformly bounded,
and we have
\begin{equation*}
\frac{1}{2} < \lambda(G_{\tau}^{-1} G_{\beta}) < \frac{3}{2},
\end{equation*}
where $\lambda(G_{\tau}^{-1} G_{\beta})$ represents the eigenvalues of $G_{\tau}^{-1} G_{\beta}$.
\end{theorem}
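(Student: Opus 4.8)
The plan is to turn the two-sided spectral bound into two positive-definiteness statements and then exploit the sign and decay structure of the coefficients $g_k^\beta$.

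First I would use the splitting $G_{\tau} = G_{\beta} - H_{\beta}$, which gives $G_{\tau}^{-1} G_{\beta} = I + G_{\tau}^{-1} H_{\beta}$. Since $H_{\beta}$ is Hankel it is symmetric, and $G_{\tau}$ is symmetric positive definite (as observed right after the proof of Theorem \ref{th3.2}); hence $G_{\tau}^{-1} H_{\beta}$ is similar to $G_{\tau}^{-1/2} H_{\beta} G_{\tau}^{-1/2}$ and all of its eigenvalues are real. Writing $\lambda(G_{\tau}^{-1} G_{\beta}) = 1 + \mu$ with $\mu \in \lambda(G_{\tau}^{-1} H_{\beta})$, the assertion $\tfrac{1}{2} < \lambda(G_{\tau}^{-1} G_{\beta}) < \tfrac{3}{2}$ is precisely $|\mu| < \tfrac{1}{2}$, i.e.
\begin{equation*}
|\bm{x}^T H_{\beta} \bm{x}| < \frac{1}{2}\, \bm{x}^T G_{\tau} \bm{x}, \qquad \bm{x} \neq \bm{0}.
\end{equation*}

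Next I would eliminate $G_{\tau}$ in favour of $G_{\beta}$. Using $G_{\tau} = G_{\beta} - H_{\beta}$ one checks that $\tfrac{1}{2} G_{\tau} + H_{\beta} = \tfrac{1}{2}(G_{\beta} + H_{\beta})$ and $\tfrac{1}{2} G_{\tau} - H_{\beta} = \tfrac{1}{2}(G_{\beta} - 3 H_{\beta})$, so the lower bound $\mu > -\tfrac{1}{2}$ is equivalent to $G_{\beta} + H_{\beta}$ being positive definite, and the upper bound $\mu < \tfrac{1}{2}$ to $G_{\beta} - 3 H_{\beta}$ being positive definite. Thus the whole theorem reduces to proving that these two Toeplitz-plus-Hankel matrices are positive definite, uniformly in $N$. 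To attack them I would invoke the fine structure of $g_k^\beta$ beyond Lemma \ref{lemma3.3}: the standard properties $g_k^\beta \leq 0$ for $k \neq 0$ with $|g_k^\beta|$ decaying in $|k|$, together with the nonnegative generating function $\sum_{k} g_k^\beta e^{\mathrm{\mathbf{i}} k\theta} = |2\sin(\theta/2)|^{\beta}$. Since the off-diagonal $g_k^\beta$ and every entry of $H_{\beta}$ are nonpositive, I would try to control $G_{\beta} - 3 H_{\beta}$ through its enlarged diagonal combined with the positivity $g_0^\beta + 2\sum_{k\geq 1} g_k^\beta > 0$ of Lemma \ref{lemma3.3}(ii), treating the boundary ``wrap-around'' entries of $H_{\beta}$ via the decay of $g_k^\beta$. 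For both matrices the cleanest route is to pass to the sine basis $Q_x$, in which $G_{\tau}$ is the diagonal $D_{\tau}$ with $\lambda_{\tau,i} > 0$, and to compare quadratic forms mode by mode against the symbol $|2\sin(\theta/2)|^{\beta}$, following \cite{huang2021spectral}.

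The hard part will be the lower bound, i.e. showing that $G_{\beta} + H_{\beta}$ is positive definite: adding the entrywise nonpositive $H_{\beta}$ shrinks the diagonal and makes the off-diagonals more negative, so plain diagonal dominance or a Gershgorin estimate is no longer available. Moreover, a crude operator-norm bound cannot succeed, because $\lambda_{\min}(G_{\tau}) \to 0$ as $N \to \infty$; the uniform constant $\tfrac{1}{2}$ can only come from $G_{\beta}$ and $G_{\tau}$ degenerating at the \emph{same} rate near the zero frequency $\theta = 0$, where the symbol $|2\sin(\theta/2)|^{\beta}$ vanishes. Establishing this matching degeneration, using the precise monotone decay of $g_k^\beta$ rather than rough bounds, is the crux of the argument; the remaining boundary corrections from the Hankel part should then be lower order and manageable.
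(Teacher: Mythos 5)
Your opening reduction is correct, and it is exactly the skeleton on which the proof this theorem rests (the paper itself gives no details here: it defers to Theorem 4.4 of \cite{huang2021spectral}): writing $G_{\tau}^{-1}G_{\beta} = I + G_{\tau}^{-1}H_{\beta}$ and using $G_{\tau} = G_{\beta} - H_{\beta}$, the bound $\lambda(G_{\tau}^{-1}G_{\beta}) \in (\tfrac{1}{2},\tfrac{3}{2})$ is equivalent to the pair of statements that $G_{\beta} + H_{\beta}$ and $G_{\beta} - 3H_{\beta}$ are positive definite, uniformly in $N$. The genuine gap is that you prove neither: your proposal ends by conceding that the ``matching degeneration'' is the crux, so the entire content of the theorem is left unestablished. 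Moreover, the roadmap you sketch for closing it points in the wrong directions. A ``mode by mode'' comparison in the sine basis cannot be carried out, because only $G_{\tau}$ is diagonalized by $Q_x$; $G_{\beta}$ and $H_{\beta}$ are not, and this non-commutativity is precisely the obstacle. More importantly, your claim that for $G_{\beta}+H_{\beta}$ ``plain diagonal dominance or a Gershgorin estimate is no longer available'' is wrong --- that is exactly how this half is settled. All off-diagonal entries of $G_{\beta}+H_{\beta}$ are nonpositive; in each row, every index $k$ contributes $|g_k^{\beta}|$ at most twice across the Toeplitz and Hankel parts, the indices $k=i$, $k=N-i$ contribute at most once, and indices $k \geq N-1$ not at all; since the symbol $|2\sin(\theta/2)|^{\beta}$ vanishes at $\theta=0$, i.e.\ $g_0^{\beta} = 2\sum_{k\geq 1}|g_k^{\beta}|$, the $i$th row sum obeys
\begin{equation*}
\text{(row sum)}_i \;\geq\; |g_i^{\beta}| + |g_{N-i}^{\beta}| + 2\sum_{k \geq N-1}|g_k^{\beta}| \;>\; 0,
\end{equation*}
so $G_{\beta}+H_{\beta}$ is strictly diagonally dominant with positive diagonal, hence positive definite.

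The half that genuinely needs an extra idea is the one you treat as routine, namely $G_{\beta} - 3H_{\beta} \succ \bm{0}$: there the off-diagonal entries $-|g_{|i-j|}^{\beta}| + 3|h_{ij}|$ change sign, and the crude bound $\sum_{j \neq i}\left(|g_{|i-j|}^{\beta}| + 3|h_{ij}|\right)$ on the Gershgorin radius fails (the Hankel row sums are too large compared with the diagonal enlargement $3|h_{ii}|$), so neither an ``enlarged diagonal'' argument nor Lemma \ref{lemma3.3}(ii) suffices. The missing observation is entrywise, not asymptotic: the Hankel index $\min(i+j,\,2N-i-j)$ always exceeds the Toeplitz index $|i-j|$, and $|g_k^{\beta}|$ is strictly decreasing in $k$, whence $|h_{ij}| \leq |g_{|i-j|}^{\beta}|$ and therefore
\begin{equation*}
\left| \,|g_{|i-j|}^{\beta}| - 3|h_{ij}|\, \right| \;\leq\; |g_{|i-j|}^{\beta}| + |h_{ij}| ,
\end{equation*}
which reduces the Gershgorin radius of $G_{\beta}-3H_{\beta}$ to the very same row-sum estimate displayed above and closes the upper bound. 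These coefficient-level facts (sign, monotonicity, and the zero-sum identity for $g_k^{\beta}$), which are the substance of the argument in \cite{huang2021spectral} that the paper leans on, are what your symbol-asymptotics picture gestures at but never delivers; as written, the proposal is a correct reformulation of the theorem followed by an unproven (and partly misdiagnosed) program.
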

\begin{proof}
This proof is slightly different to \cite[Theorem 4.4]{huang2021spectral}.
Thus, we omit it here.
\end{proof}

For any real symmetric matrices $D_1, D_2 \in \mathbb{R}^{n \times n}$,
we denote $D_1 \succ (\mathrm{or} \succeq) ~D_2$ if $D_1 - D_2$ is positive definite (or semi-definite).
Then, we have the following properties.
\begin{lemma}\label{lemma3.4}
\begin{itemize}
	\item[(i)] {$\bm{0} \prec \frac{1}{2} G_{\tau} \prec G_{\beta} \prec \frac{3}{2} G_{\tau}$;} 
	\item[(ii)] {$\frac{1}{2} I_x \prec G_{\tau}^{-\frac{1}{2}} G_{\beta} G_{\tau}^{-\frac{1}{2}}
		\prec \frac{3}{2} I_x$.}
\end{itemize}
\end{lemma}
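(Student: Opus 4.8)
The plan is to read off both inclusions from the eigenvalue localization already established in Theorem~\ref{th3.3}. Recall that the proof of Theorem~\ref{th3.2} shows that $G_{\tau}$ is symmetric positive definite, so it admits a symmetric positive definite square root $G_{\tau}^{1/2}$ with inverse $G_{\tau}^{-1/2}$. The key observation is that although $G_{\tau}^{-1} G_{\beta}$ is not symmetric, it is similar to the real symmetric matrix
\begin{equation*}
S = G_{\tau}^{-\frac{1}{2}} G_{\beta} G_{\tau}^{-\frac{1}{2}}
= G_{\tau}^{\frac{1}{2}} \left( G_{\tau}^{-1} G_{\beta} \right) G_{\tau}^{-\frac{1}{2}},
\end{equation*}
so the two matrices share the same spectrum. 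By Theorem~\ref{th3.3}, every eigenvalue of $G_{\tau}^{-1} G_{\beta}$, and hence of $S$, lies in the open interval $(1/2, 3/2)$.

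I would establish (ii) first, since it is the more direct consequence. As $S$ is real symmetric, the spectral theorem provides an orthogonal diagonalization $S = U \Lambda U^T$ with $\Lambda$ real and diagonal. The bound $\frac{1}{2} < \lambda(S) < \frac{3}{2}$ on each diagonal entry of $\Lambda$ is precisely $\frac{1}{2} I_x \prec \Lambda \prec \frac{3}{2} I_x$, and conjugating this chain by the orthogonal matrix $U$ yields
\begin{equation*}
\frac{1}{2} I_x \prec S \prec \frac{3}{2} I_x,
\end{equation*}
which is exactly (ii).

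I would then obtain (i) from (ii) by a congruence transformation. Multiplying the chain in (ii) on both sides by $G_{\tau}^{1/2}$---which preserves the Loewner ordering, since $X \succ Y$ holds if and only if $B X B^T \succ B Y B^T$ for any invertible $B$---and using $G_{\tau}^{1/2} S G_{\tau}^{1/2} = G_{\beta}$, I get
\begin{equation*}
\frac{1}{2} G_{\tau} \prec G_{\beta} \prec \frac{3}{2} G_{\tau}.
\end{equation*}
The remaining inequality $\bm{0} \prec \frac{1}{2} G_{\tau}$ is immediate from the positive definiteness of $G_{\tau}$ noted in the proof of Theorem~\ref{th3.2}, so (i) follows.

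Since the quantitative content is carried entirely by Theorem~\ref{th3.3}, I do not anticipate a serious obstacle. The only point requiring care is that $G_{\tau}^{-1} G_{\beta}$ is itself nonsymmetric, so one must pass to the symmetrized matrix $S$ before invoking the spectral theorem and the Loewner ordering; the similarity $S = G_{\tau}^{1/2}(G_{\tau}^{-1}G_{\beta})G_{\tau}^{-1/2}$ is what guarantees that the eigenvalue bounds of Theorem~\ref{th3.3} transfer to $S$.
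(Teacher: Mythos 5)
Your proposal is correct and follows essentially the same route as the paper: both arguments take all quantitative content from Theorem~\ref{th3.3} and transfer it to the symmetric matrix $G_{\tau}^{-\frac{1}{2}} G_{\beta} G_{\tau}^{-\frac{1}{2}}$, so that the spectral bounds become Loewner inequalities. The only differences are organizational---the paper proves (i) first via generalized Rayleigh quotients and then (ii) by a change of variables, while you prove (ii) first via the spectral theorem and recover (i) by congruence---and your explicit similarity step $S = G_{\tau}^{\frac{1}{2}}\left(G_{\tau}^{-1}G_{\beta}\right)G_{\tau}^{-\frac{1}{2}}$ cleanly justifies a point the paper leaves implicit (indeed, the paper's substitution labeled $\bm{y} = G_{\beta}^{-\frac{1}{2}}\bm{z}$ is a typo for $\bm{y} = G_{\tau}^{-\frac{1}{2}}\bm{z}$, an issue your ordering sidesteps).
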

\begin{proof}
By the Rayleigh quotients theorem \cite[Theorem 4.2.2]{horn2012matrix} and Theorem \ref{th3.3}, 
we have
\begin{equation*}
\frac{1}{2} < \frac{\bm{z}^T G_{\beta} \bm{z}} {\bm{z}^T G_{\tau} \bm{z}} < \frac{3}{2},
\end{equation*}
where $\bm{z} \in \mathbb{R}^{N - 1}$ is an arbitrary nonzero vector.
That is,
\begin{equation*}
\frac{1}{2} \bm{z}^T G_{\tau} \bm{z} < \bm{z}^T G_{\beta} \bm{z} < \frac{3}{2} \bm{z}^T G_{\tau} \bm{z}.
\end{equation*}
Then, we get
\begin{equation*}
\bm{0} \prec \frac{1}{2} G_{\tau} \prec G_{\beta} \prec \frac{3}{2} G_{\tau}.
\end{equation*}
On the other hand, 
\begin{equation*}
\frac{1}{2} < \frac{\bm{z}^T G_{\tau}^{-\frac{1}{2}} G_{\beta} G_{\tau}^{-\frac{1}{2}} \bm{z}}{\bm{z}^T \bm{z}}
\xlongequal{\bm{y} = G_{\beta}^{-\frac{1}{2}} \bm{z}}
\frac{\bm{y}^T G_{\beta} \bm{y}}{\bm{y}^T G_{\tau} \bm{y}} < \frac{3}{2}.
\end{equation*}
This implies (ii) and the proof is completed.
\end{proof}

For estimating the condition number of $P_{l}^{-1} \tilde{\mathcal{M}} P_{r}^{-1}$, 
another two auxiliary lemmas are needed.
\begin{lemma}(\cite{lin2021parallel})\label{lemma3.5}
Let $B_1, B_2 \in \mathbb{R}^{n \times n}$ be symmetric matrices such that 
$\bm{0} \prec B_1 \preceq B_2$. Then, $\bm{0} \prec B_2^{-1} \preceq B_1^{-1}$.
\end{lemma}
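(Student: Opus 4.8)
The plan is to exploit the standard order-reversing property of matrix inversion on the positive-definite cone, reducing the general statement to a scalar eigenvalue comparison by means of a congruence (whitening) transformation. First I would record the easy part: since $B_1 \succ \bm{0}$ and $B_1 \preceq B_2$, one immediately has $B_2 \succ \bm{0}$, so both matrices are invertible; because $B_1$ and $B_2$ are symmetric positive definite, their inverses are symmetric positive definite as well, the eigenvalues of $B_i^{-1}$ being the reciprocals of those of $B_i$. This disposes of the claims $\bm{0} \prec B_2^{-1}$ and $\bm{0} \prec B_1^{-1}$, and leaves only the inequality $B_2^{-1} \preceq B_1^{-1}$ to be proved.

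For the main inequality I would introduce the symmetric positive-definite square root $B_1^{1/2}$ and set $C = B_1^{-1/2} B_2 B_1^{-1/2}$. Since the map $X \mapsto B_1^{-1/2} X B_1^{-1/2}$ is a congruence by the invertible symmetric matrix $B_1^{-1/2}$, it preserves the Loewner order; applying it to $B_1 \preceq B_2$ turns the hypothesis into $I \preceq C$. Now $C$ is symmetric positive definite, hence orthogonally diagonalizable, and $I \preceq C$ says precisely that every eigenvalue of $C$ is at least $1$. Consequently every eigenvalue of $C^{-1}$ lies in $(0,1]$, that is, $C^{-1} \preceq I$.

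It then remains to unwind the substitution. A direct computation gives $C^{-1} = B_1^{1/2} B_2^{-1} B_1^{1/2}$, so the inequality $C^{-1} \preceq I$ reads $B_1^{1/2} B_2^{-1} B_1^{1/2} \preceq I$. Applying once more a congruence, this time by $B_1^{-1/2}$, and using $B_1^{-1/2} I B_1^{-1/2} = B_1^{-1}$, I obtain $B_2^{-1} \preceq B_1^{-1}$, which is the desired conclusion.

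There is no genuine analytic difficulty here; the argument is entirely linear-algebraic. The one point that must be handled carefully --- and which I regard as the crux --- is the justification that congruence by an invertible symmetric matrix preserves the Loewner order, together with the companion fact that $I \preceq C \iff C^{-1} \preceq I$ for symmetric positive-definite $C$. Both follow from the Rayleigh-quotient characterization already invoked elsewhere in the paper: for any nonzero $\bm{z}$ one has $\bm{z}^T (B_1^{-1/2} X B_1^{-1/2}) \bm{z} = \bm{y}^T X \bm{y}$ with $\bm{y} = B_1^{-1/2} \bm{z}$ ranging over all nonzero vectors, so inequalities transport verbatim, while the second equivalence is obtained by diagonalizing $C$ and comparing reciprocals of its eigenvalues.
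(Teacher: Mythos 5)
Your proof is correct. Note, however, that the paper itself offers no proof of this lemma: it is stated as a quoted result from the reference \cite{lin2021parallel}, so there is no internal argument to compare yours against. Your whitening argument --- congruence by $B_1^{-1/2}$ to reduce $B_1 \preceq B_2$ to $I \preceq C$ with $C = B_1^{-1/2} B_2 B_1^{-1/2}$, inversion of eigenvalues to get $C^{-1} \preceq I$, and a second congruence to recover $B_2^{-1} \preceq B_1^{-1}$ --- is the standard route to the operator anti-monotonicity of inversion, and every step you flag as needing justification (order preservation under congruence by an invertible matrix, and $I \preceq C \iff C^{-1} \preceq I$ for symmetric positive definite $C$) is handled correctly via the Rayleigh-quotient substitution $\bm{y} = B_1^{-1/2}\bm{z}$ and diagonalization. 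The one stylistic remark worth making is that your proof is also consistent in spirit with how the paper uses its tools elsewhere (e.g., the Rayleigh-quotient manipulations in Lemma 3.4 and Theorem 3.4), so it would splice into the paper's framework without requiring any machinery the authors have not already invoked.
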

\begin{lemma}(\cite{lin2021parallel})\label{lemma3.6}
For positive numbers $\xi_k, \eta_k$ ($1 \leq k \leq n$), it holds that
\begin{equation*}
\min_{1 \leq k \leq n} \frac{\xi_k}{\eta_k} \leq \left( \sum_{k = 1}^{n} \eta_k \right)^{-1}
\left( \sum_{k = 1}^{n} \xi_k \right) \leq \max_{1 \leq k \leq n} \frac{\xi_k}{\eta_k}.
\end{equation*}	
\end{lemma}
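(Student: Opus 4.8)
The plan is to reduce the aggregated ratio to the pointwise ratios $\xi_k/\eta_k$ and then exploit the elementary fact that a positively weighted average of numbers lies between their extremes. Write $m = \min_{1 \le k \le n} \xi_k/\eta_k$ and $M = \max_{1 \le k \le n} \xi_k/\eta_k$, so that by the very definition of the minimum and maximum one has the pointwise bounds $m \le \xi_k/\eta_k \le M$ for every $k$.

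First I would clear denominators. Since each $\eta_k$ is \emph{strictly positive}, multiplying the pointwise bounds by $\eta_k$ preserves both inequality signs and yields $m\,\eta_k \le \xi_k \le M\,\eta_k$ for $1 \le k \le n$. Next I would sum these $n$ inequalities termwise; adding inequalities of a common orientation is valid for finite sums, whence
\[
m \sum_{k=1}^{n} \eta_k \;\le\; \sum_{k=1}^{n} \xi_k \;\le\; M \sum_{k=1}^{n} \eta_k .
\]
Finally, the aggregate denominator $\sum_{k=1}^{n}\eta_k$ is a finite sum of positive numbers, hence itself strictly positive, so dividing the entire chain by it leaves all inequality directions intact and produces exactly $m \le \bigl(\sum_{k=1}^n \eta_k\bigr)^{-1}\bigl(\sum_{k=1}^n \xi_k\bigr) \le M$, which is the assertion.

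There is no substantive obstacle here; the lemma is elementary, and the only point that must not be glossed over is the role of the strict positivity of the $\eta_k$. It is needed twice: once to preserve the inequality orientation when multiplying through by $\eta_k$, and again to guarantee that $\sum_{k=1}^n \eta_k > 0$, so that the concluding division is legitimate and introduces no sign reversal. This is precisely why the hypothesis demands positive, rather than merely nonnegative, numbers.
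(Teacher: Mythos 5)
Your proof is correct and complete. Note that the paper itself does not prove this lemma at all---it is imported verbatim from the cited reference \cite{lin2021parallel}---so there is no in-paper argument to compare against; your route (multiply the pointwise bounds $m \le \xi_k/\eta_k \le M$ by $\eta_k > 0$, sum over $k$, and divide by the positive total $\sum_{k=1}^{n}\eta_k$) is the standard elementary proof, and you correctly isolate the two places where strict positivity of the $\eta_k$ is needed.
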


For an arbitrary nonsingular matrix $A$, we define its condition number as
\begin{equation*}
\kappa_2(A) = \| A^{-1} \|_2 \| A \|_2.
\end{equation*}

Now, we are in position to estimate $\kappa_2(P_{l}^{-1} \tilde{\mathcal{M}} P_{r}^{-1})$.
\begin{theorem}\label{th3.4}
For any $\alpha \in (0, 0.3624)$, the condition number of $P_{l}^{-1} \tilde{\mathcal{M}} P_{r}^{-1}$ is bounded,
i.e.,
\begin{equation*}
\kappa_2(P_{l}^{-1} \tilde{\mathcal{M}} P_{r}^{-1}) < 2 \sqrt{3}.
\end{equation*}
\end{theorem}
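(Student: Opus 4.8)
The plan is to bound $\|P_l^{-1}\tilde{\mathcal M}P_r^{-1}\|_2$ and $\|(P_l^{-1}\tilde{\mathcal M}P_r^{-1})^{-1}\|_2$ separately and then multiply. First I would record the elementary Kronecker identity
\[
P_l P_r = \mathcal P := (\kappa\,G_\tau)\otimes I_t + I_x\otimes A_t ,
\]
which exhibits the preconditioner product $\mathcal P$ as the exact $\tau$-analogue of $\tilde{\mathcal M}=(\kappa\,G_\beta)\otimes I_t+I_x\otimes A_t$, differing only in $G_\beta$ versus $G_\tau$. Since $P_r$ is symmetric with $P_r^2=(\kappa\,G_\tau)\otimes I_t$, this gives $P_lP_l^{T}=\mathcal P\,(P_r^2)^{-1}\mathcal P^{T}$. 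Writing $S=P_l^{-1}\tilde{\mathcal M}P_r^{-1}$, the matrix $SS^{T}=P_l^{-1}\big[\tilde{\mathcal M}(P_r^2)^{-1}\tilde{\mathcal M}^{T}\big]P_l^{-T}$ is similar to $\big[\tilde{\mathcal M}(P_r^2)^{-1}\tilde{\mathcal M}^{T}\big](P_lP_l^{T})^{-1}$, so the eigenvalues of $SS^{T}$ are exactly the generalized eigenvalues of the symmetric-definite pencil $\big(\tilde{\mathcal M}(P_r^2)^{-1}\tilde{\mathcal M}^{T},\,P_lP_l^{T}\big)$.

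Next I would turn these generalized eigenvalues into a single Rayleigh quotient. Because $\bm z^{T}\tilde{\mathcal M}(P_r^2)^{-1}\tilde{\mathcal M}^{T}\bm z=\|P_r^{-1}\tilde{\mathcal M}^{T}\bm z\|_2^2$ and $\bm z^{T}P_lP_l^{T}\bm z=\|P_r^{-1}\mathcal P^{T}\bm z\|_2^2$, and since $P_r^{-1}\tilde{\mathcal M}^{T}\bm z=[(\Phi\otimes I_t)+W_0]\bm p$ and $P_r^{-1}\mathcal P^{T}\bm z=[I+W_0]\bm p$ with
\[
\bm p=[\kappa^{1/2}G_\tau^{1/2}\otimes I_t]\bm z,\qquad
\Phi:=G_\tau^{-1/2}G_\beta G_\tau^{-1/2},\qquad
W_0:=(\kappa^{-1}G_\tau^{-1})\otimes A_t^{T},
\]
the substitution $\bm z\mapsto\bm p$ (a bijection) yields
\[
\|S\|_2=\big\|[(\Phi\otimes I_t)+W_0][I+W_0]^{-1}\big\|_2,\qquad
\|S^{-1}\|_2=\big\|[I+W_0][(\Phi\otimes I_t)+W_0]^{-1}\big\|_2 .
\]

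The two factors are then controlled by a numerical-range estimate, which is the crux. Lemma~\ref{lemma3.4}(ii) gives $\tfrac12 I_x\prec\Phi\prec\tfrac32 I_x$, hence $\|\Phi-I_x\|_2<\tfrac12$. By Theorem~\ref{th3.1} the matrix $A_t+A_t^{T}$ is positive definite, so $W_0+W_0^{T}=(\kappa^{-1}G_\tau^{-1})\otimes(A_t+A_t^{T})\succ\bm 0$; consequently the symmetric part of $I+W_0$ is $\succeq I$ and the symmetric part of $(\Phi\otimes I_t)+W_0$ is $\succeq\lambda_{\min}(\Phi)\,I\succ\tfrac12 I$. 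Because a real matrix whose symmetric part is $\succeq cI$ has smallest singular value at least $c$, this gives $\|(I+W_0)^{-1}\|_2\le 1$ and $\|[(\Phi\otimes I_t)+W_0]^{-1}\|_2\le 2$. Writing $[(\Phi\otimes I_t)+W_0][I+W_0]^{-1}=I+[(\Phi-I_x)\otimes I_t][I+W_0]^{-1}$ and the analogous identity for $S^{-1}$, the triangle inequality yields
\[
\|S\|_2<1+\tfrac12\cdot1=\tfrac32,\qquad \|S^{-1}\|_2<1+\tfrac12\cdot2=2,
\]
so that $\kappa_2(S)=\|S\|_2\,\|S^{-1}\|_2<3<2\sqrt3$, which is the asserted bound (with room to spare).

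I expect the main obstacle to be that $G_\beta$ and $G_\tau$ neither commute nor are simultaneously diagonalizable, so $S$ is genuinely non-normal and $\kappa_2(S)$ cannot be inferred from its spectrum alone. The device that removes this difficulty is the change of variables isolating the symmetric factor $\Phi=G_\tau^{-1/2}G_\beta G_\tau^{-1/2}$, whose spectral sandwich (Lemma~\ref{lemma3.4}) absorbs the first-order mismatch between $G_\beta$ and $G_\tau$; the positive definiteness of $A_t+A_t^{T}$ (Theorem~\ref{th3.1}) is then precisely what keeps the numerical ranges of $I+W_0$ and of $(\Phi\otimes I_t)+W_0$ off the imaginary axis, so their inverses stay uniformly bounded in $N$ and $M$. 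If one prefers to argue entirely with symmetric pencils, the same two ingredients bound $\lambda_{\max}(SS^{T})<3$ and $\lambda_{\min}(SS^{T})>\tfrac14$ — the latter obtained by reducing the relevant quadratic forms over the eigen-blocks of $G_\tau$ via Lemmas~\ref{lemma3.5} and \ref{lemma3.6} — which reproduces the stated constant $2\sqrt3$.
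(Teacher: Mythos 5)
Your proof is correct, and it takes a genuinely different route from the paper's. All of your identities check out: $P_lP_r=\mathcal P$, $SS^T=P_l^{-1}\bigl[\tilde{\mathcal M}(P_r^2)^{-1}\tilde{\mathcal M}^T\bigr]P_l^{-T}$, the $G_\tau$-whitening that turns the symmetric-definite pencil into the pair $A=(\Phi\otimes I_t)+W_0$, $B=I+W_0$, and the exact equalities $\|S\|_2=\|AB^{-1}\|_2$, $\|S^{-1}\|_2=\|BA^{-1}\|_2$. The paper proceeds differently: it factors $\tilde{\mathcal M}=\hat{\mathcal M}\bigl[(\kappa G_\beta)^{1/2}\otimes I_t\bigr]$ with $\hat{\mathcal M}=(\kappa G_\beta)^{1/2}\otimes I_t+(\kappa G_\beta)^{-1/2}\otimes A_t$, sandwiches the middle factor of $SS^T$ by Lemma \ref{lemma3.4}(ii), and then compares the three-term Kronecker expansions of $\hat{\mathcal M}\hat{\mathcal M}^T$ and $P_lP_l^T$ term by term through the sum-ratio Lemma \ref{lemma3.6} (together with Lemmas \ref{lemma3.4}(i) and \ref{lemma3.5}), arriving at $\tfrac14 I\prec SS^T\prec 3I$ and hence $\kappa_2(S)<2\sqrt3$ — essentially the "symmetric pencil" alternative you sketch in your closing paragraph. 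Your main line instead uses an accretivity (numerical-range) bound — a real matrix whose symmetric part dominates $cI$ has $\sigma_{\min}\ge c$ — plus the multiplicative-perturbation identity $AB^{-1}=I+[(\Phi-I_x)\otimes I_t]B^{-1}$ and the triangle inequality. Both arguments hinge on the same two ingredients (the spectral sandwich $\tfrac12 I_x\prec\Phi\prec\tfrac32 I_x$ and the positive definiteness of $A_t+A_t^T$, which is exactly where $\alpha\in(0,0.3624)$ enters), but your mechanism dispenses with Lemmas \ref{lemma3.5} and \ref{lemma3.6} entirely and yields the strictly sharper bound $\kappa_2(S)<\tfrac32\cdot 2=3<2\sqrt3$; the paper's termwise-ratio argument is more modular but loses a factor in combining the $G_\beta$ and $G_\beta^{-1}$ terms separately.
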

\begin{proof}
Denote $\hat{\mathcal{M}} = (\kappa\; G_{\beta})^{\frac{1}{2}} \otimes I_t 
+ (\kappa\; G_{\beta})^{-\frac{1}{2}} \otimes A_t$.
Then, we have
\begin{equation*}
\tilde{\mathcal{M}} = \hat{\mathcal{M}} \left[ (\kappa\; G_{\beta})^{\frac{1}{2}} \otimes I_t \right]
\end{equation*} 
and 
\begin{equation}\label{eq3.7}
\left( P_{l}^{-1} \tilde{\mathcal{M}} P_{r}^{-1} \right) 
\left( P_{l}^{-1} \tilde{\mathcal{M}} P_{r}^{-1} \right)^T 
= P_{l}^{-1} \hat{\mathcal{M}} \left\{ \left[ (\kappa\; G_{\beta})^{\frac{1}{2}} 
(\kappa\; G_{\tau})^{-1} (\kappa\; G_{\beta})^{\frac{1}{2}} \right] \otimes I_t \right\}
\hat{\mathcal{M}}^T P_{l}^{-T}.
\end{equation} 

We notice that the matrix $\left[ (\kappa\; G_{\beta})^{\frac{1}{2}} 
(\kappa\; G_{\tau})^{-1} (\kappa\; G_{\beta})^{\frac{1}{2}} \right] \otimes I_t$
is similar to $\left[ (\kappa\; G_{\tau})^{-\frac{1}{2}} 
(\kappa\; G_{\beta}) (\kappa\; G_{\tau})^{-\frac{1}{2}} \right] \otimes I_t$.
Using Lemma \ref{lemma3.4}(ii), we obtain
\begin{equation*}
\frac{1}{2} I_{xt} \prec \left[ (\kappa\; G_{\beta})^{\frac{1}{2}} 
(\kappa\; G_{\tau})^{-1} (\kappa\; G_{\beta})^{\frac{1}{2}} \right] \otimes I_t
\prec \frac{3}{2} I_{xt},
\end{equation*}
where $I_{xt} = I_x \otimes I_t$.
Bring this estimate into Eq.~\eqref{eq3.7}, we get
\begin{equation}\label{eq3.8}
\frac{1}{2} P_{l}^{-1} \hat{\mathcal{M}} \hat{\mathcal{M}}^T P_{l}^{-T} 
\prec \left( P_{l}^{-1} \tilde{\mathcal{M}} P_{r}^{-1} \right) 
\left( P_{l}^{-1} \tilde{\mathcal{M}} P_{r}^{-1} \right)^T
\prec \frac{3}{2} P_{l}^{-1} \hat{\mathcal{M}} \hat{\mathcal{M}}^T P_{l}^{-T}.
\end{equation}

Now, it remains to give a bound of the matrix $P_{l}^{-1} \hat{\mathcal{M}} \hat{\mathcal{M}}^T P_{l}^{-T}$.
Let $\bm{z} \in \mathbb{R}^{N - 1}$ be an arbitrary nonzero vector.
Then, the Rayleigh quotient of $P_{l}^{-1} \hat{\mathcal{M}} \hat{\mathcal{M}}^T P_{l}^{-T}$ is
\begin{equation}\label{eq3.9}
\begin{split}
\frac{\bm{z}^T P_{l}^{-1} \hat{\mathcal{M}} \hat{\mathcal{M}}^T P_{l}^{-T} \bm{z}}{\bm{z}^T \bm{z}}
& \xlongequal{\bm{y} = P_{l}^{-T} \bm{z}}
\frac{\bm{y}^T \hat{\mathcal{M}} \hat{\mathcal{M}}^T \bm{y}}{\bm{y}^T P_{l} P_{l}^T \bm{y}} \\
& = \frac{\bm{y}^T \left[ (\kappa\; G_{\beta}) \otimes I_t
	+ I_x \otimes (A_t + A_t^T) + (\kappa\; G_{\beta})^{-1} \otimes (A_t A_t^T) \right] \bm{y}}
{\bm{y}^T \left[ (\kappa\; G_{\tau}) \otimes I_t
	+ I_x \otimes (A_t + A_t^T) + (\kappa\; G_{\tau})^{-1} \otimes (A_t A_t^T) \right] \bm{y}}.
\end{split}
\end{equation}

It is easy to check that $A_t A_t^T \succ \bm{0}$ since $A_t$ is a lower triangular matrix with
its all diagonal entries are positive. 
Moreover, from Theorem \ref{th3.1} we know that for any $\alpha \in (0, 0.3624)$, the matrix $A_t + A_t^T$
is positive definite.
Thus, we can apply Lemma \ref{lemma3.6} to estimate Eq.~\eqref{eq3.9}.

On the one hand, adopting Lemma \ref{lemma3.4}(i), we have 
\begin{equation}\label{eq3.10}
\frac{1}{2} < \frac{\bm{y}^T \left[ (\kappa\; G_{\beta}) \otimes I_t \right] \bm{y}}
{\bm{y}^T \left[ (\kappa\; G_{\tau}) \otimes I_t \right] \bm{y}}
< \frac{3}{2}.
\end{equation}

On the other hand, combining Lemma \ref{lemma3.4}(i) and Lemma \ref{lemma3.5},
we get
\begin{equation}\label{eq3.11}
\frac{2}{3} < \frac{\bm{y}^T \left[ (\kappa\; G_{\beta})^{-1} \otimes (A_t A_t^T) \right] \bm{y}}
{\bm{y}^T \left[ (\kappa\; G_{\tau})^{-1} \otimes (A_t A_t^T) \right] \bm{y}}
< 2.
\end{equation}

Applying Lemma \ref{lemma3.6} to Eq.~\eqref{eq3.9}, we have
\begin{equation*}
\frac{1}{2} = \min \left\{ \frac{1}{2}, 1, \frac{2}{3} \right\}
< \frac{\bm{z}^T P_{l}^{-1} \hat{\mathcal{M}} \hat{\mathcal{M}}^T P_{l}^{-T} \bm{z}}{\bm{z}^T \bm{z}}
< \max \left\{ \frac{3}{2}, 1, 2 \right\} = 2,
\end{equation*}
where Eqs.~\eqref{eq3.10} and \eqref{eq3.11} are used.
Then, we have 
\begin{equation*}
\frac{1}{4} I_{xt} \prec \left( P_{l}^{-1} \tilde{\mathcal{M}} P_{r}^{-1} \right) 
\left( P_{l}^{-1} \tilde{\mathcal{M}} P_{r}^{-1} \right)^T
\prec 3 I_{xt}.
\end{equation*}
This implies that
\begin{equation*}
\kappa_2(P_{l}^{-1} \tilde{\mathcal{M}} P_{r}^{-1}) < \sqrt{3 / \left( \frac{1}{4} \right)} = 2 \sqrt{3}.
\end{equation*}
\end{proof}

\begin{remark}
In Eq.~\eqref{eq2.5}, we can calculate $\bm{u}^2$ first.
Then, the rest part has block lower Toeplitz with Toeplitz blocks structure.
Following this way, we may loosen the restriction of $\alpha$ in Theorem \ref{th3.4}.
However, we have to design another preconditioner iterative method for obtaining $\bm{u}^2$.
On the other hand, our idea has another benefit: it can be extended to graded time steps 
such as \cite{zhao2021preconditioning}.
\end{remark}

\section{Numerical experiments}
\label{sec4}

In this section, we report three examples.
Example 1 shows the time and space convergence orders of the scheme \eqref{eq2.2}.
The performance of our preconditioners in Section \ref{sec3} is displayed in Example 2.
We extend the bilateral preconditioning technique to two space dimensions, and
the corresponding results are reported in Example 3.
Denote 
\begin{equation*}
Err_\infty(h, \tau) = \max_{0 \leq i \leq N, 1 \leq j \leq M} \mid e_{i}^{j} \mid, \quad
Err_2(h, \tau) = \max_{1 \leq j \leq M} \| \bm{e}^{j} \|,
\end{equation*}
where $e_{i}^{j} = u(x_i,t_j) - u_i^j$ and $\bm{e}^{j} = [e_0^j,\ldots,e_N^j]^T$.
Then, we denote
\begin{equation*}
CO_{\infty, \tau} =\log_{\tau_1/ \tau_2} \frac{Err_\infty(h, \tau_1)}{Err_\infty(h, \tau_2)}, \quad
CO_{2, \tau} =\log_{\tau_1/ \tau_2} \frac{Err_2(h, \tau_1)}{Err_2(h, \tau_2)},
\end{equation*}
\begin{equation*}
CO_{\infty, h} =\log_{h_1/ h_2} \frac{Err_\infty(h_1, \tau)}{Err_\infty(h_2, \tau)}, \quad
CO_{2, h} =\log_{h_1/ h_2} \frac{Err_2(h_1, \tau)}{Err_2(h_2, \tau)}.
\end{equation*}
Some other notations that will appear later are collected here: 
``BFSM" and ``BS" mean that MATLAB’s backslash operator
is used to solve Eqs.~\eqref{eq2.2} and \eqref{eq2.5}, respectively.
``$\mathcal{I}$" and ``$\mathcal{P}$" mean that Eq.~\eqref{eq2.5} is solved by
the BiCGSTAB method and the preconditioned BiCGSTAB (called PBiCGSTAB) method, respectively.
``Time" is the total CPU time in seconds for solving the system \eqref{eq2.5}.
``Iter1" represents the number of iterations required by 
the (P)CG (PCG means preconditioned CG) method \cite{saad2003iterative} 
for solving the fast L1 scheme \eqref{eq2.2b}.
``Iter2" is the number of iterations required by the (P)BiCGSTAB method for solving Eq.~\eqref{eq2.5}. 
The chosen Krylov subspace method is terminated if the relative residual error satisfies 
$\frac{\| \bm{r}^{(k)} \|}{\| \bm{r}^{(0)} \|} \leq 10^{-9}$
or the iteration number is more than $1000$, 
where $\bm{r}^{(k)}$ denotes the residual vector in the $k$th iteration.
The initial guess is chosen as the zero vector.
Moreover, the generalized minimum residual method \cite{saad2003iterative} is not considered in this work, 
since it requires large amounts of storage due to the orthogonalization process.

All experiments were performed on a Windows 7 (64 bit) PC-AMD PRO A10-8750B R7 CPU 3.60GHz, 16 GB of RAM using MATLAB R2017b.
\begin{remark}
It is easy to check that the coefficient matrix of \eqref{eq2.2b} is 
a time-independent symmetric positive definite Toeplitz matrix.
Thus, in our bilateral preconditioning technique, 
we choose the (P)CG method based Gohberg-Semencul formula (GSF) \cite{ng2004iterative}
for fast solving Eq.~\eqref{eq2.2b}.
However, the GSF formula cannot be applied to solve block Toeplitz systems with Toeplitz blocks.
Thus, for solving the two-dimensional problem of Eq.~\eqref{eq1.1},
we have to make a small change in the bilateral preconditioning technique.
That is, we use the (P)CG method to solve 
the two-dimensional version of \eqref{eq2.2b}.
Moreover, in this paper, we construct a $\tau$-preconditioner for \eqref{eq2.2b}, 
see \cite{bini1990new} for details.
\end{remark}

\noindent\textbf{Example 1.}
We consider Eq.~\eqref{eq1.1} with $x_L = 0$, $\kappa = x_R = T = 1$ and the source term
\begin{equation*}
\begin{split}
f(x,t) = & \left( \frac{\Gamma(4 + \alpha)}{\Gamma(4)} t^3 
+ \frac{\Gamma(3)}{\Gamma(3 - \alpha)} t^{2 - \alpha} \right) x^2 (1 - x)^2
+ \frac{\kappa \left( t^{3 + \alpha} + t^2 + 1 \right)}{2 \cos(\pi \beta/2)} \times \\
& \Big\{ \frac{\Gamma(3)}{\Gamma(3 - \beta)} \left[ x^{2 - \beta} + (1 - x)^{2 - \beta} \right] 
- \frac{2 \Gamma(4)}{\Gamma(4 - \beta)} \left[ x^{3 - \beta} + (1 - x)^{3 - \beta} \right] + \\
& \frac{\Gamma(5)}{\Gamma(5 - \beta)} \left[ x^{4 - \beta} + (1 - x)^{4 - \beta} \right] \Big\}.
\end{split}
\end{equation*}
The exact solution is $u(x,t) = \left( t^{3 + \alpha} + t^2 + 1 \right) x^2 (1 - x)^2$.
\begin{table}[ht]\tabcolsep=5.0pt
	\caption{Numerical errors and the observed time convergence orders for Example 1
		with $N = M^{(3 - \alpha)/2}$.}
	\centering
	\begin{tabular}{cccccc}
		\hline
		$(\alpha,\beta)$ & $M$ & $Err_\infty(h,\tau)$ & $CO_{\infty, \tau}$ & $Err_2(h,\tau)$ & $CO_{2, \tau}$ \\
		\hline
		(0.1, 1.5) & 10 & 3.3558E-04 & -- & 2.1958E-04 & -- \\
		           & 20 & 4.2391E-05 & 2.9848 & 2.7415E-05 & 3.0017 \\
 		           & 40 & 5.3517E-06 & 2.9857 & 3.5198E-06 & 2.9614 \\
		           & 80 & 6.7403E-07 & 2.9891 & 4.5958E-07 & 2.9371 \\
		           & 160 & 9.2695E-08 & 2.8622 & 6.0949E-08 & 2.9146 \\
		\hline
		(0.4, 1.7) & 10 & 1.0146E-03 & -- & 6.9999E-04 & -- \\
		           & 20 & 1.5215E-04 & 2.7373 & 1.0266E-04 & 2.7695 \\
		           & 40 & 2.4455E-05 & 2.6373 & 1.6239E-05 & 2.6603 \\
		           & 80 & 3.8098E-06 & 2.6823 & 2.5054E-06 & 2.6963 \\
		           & 160 & 5.9581E-07 & 2.6768 & 3.9086E-07 & 2.6803 \\
		\hline
		(0.7, 1.4) & 10 & 1.2174E-03 & -- & 8.0414E-04 & -- \\
		           & 20 & 2.4993E-04 & 2.2842 & 1.6106E-04 & 2.3198 \\
		           & 40 & 4.9078E-05 & 2.3484 & 3.1689E-05 & 2.3455 \\
		           & 80 & 9.5109E-06 & 2.3674 & 6.2572E-06 & 2.3404 \\
		           & 160 & 1.8604E-06 & 2.3540 & 1.2594E-06 & 2.3128 \\
		\hline
		(0.9, 1.9) & 10 & 4.0154E-03 & -- & 2.8992E-03 & -- \\
		           & 20 & 9.8220E-04 & 2.0315 & 7.0490E-04 & 2.0402 \\
		           & 40 & 2.3079E-04 & 2.0894 & 1.6464E-04 & 2.0981 \\
		           & 80 & 5.4304E-05 & 2.0875 & 3.8477E-05 & 2.0972 \\
		           & 160 & 1.2432E-05 & 2.1270 & 8.7530E-06 & 2.1361 \\
		\hline
	\end{tabular}
	\label{tab1}
\end{table}

Table \ref{tab1} lists the errors and the observed time convergence orders 
for different values of $\alpha$ and $\beta$.
From this table, we can see that for fixed $N = M^{(3 - \alpha)/2}$,
the observed convergence order in time is $3 - \alpha$.
Table \ref{tab2} reports the errors and the observed convergence order
in space for different values of $\alpha$ and $\beta$.
It shows that for fixed $M = 1024$, 
the errors in Table \ref{tab2} decrease steadily with increasing $N$,
and the observed convergence order in space is 2 as expected.
In a word, our method is reliable and accurate.
\begin{table}[ht]\tabcolsep=5.0pt
	\caption{Numerical errors and the observed space convergence orders for Example 1
		with $M = 1024$.}
	\centering
	\begin{tabular}{cccccc}
		\hline
		$(\alpha,\beta)$ & $N$ & $Err_\infty(h,\tau)$ & $CO_{\infty, \tau}$ & $Err_2(h,\tau)$ & $CO_{2, \tau}$ \\
		\hline
		(0.1, 1.5) & 10 & 3.1533E-03 & -- & 2.1393E-03 & -- \\
		           & 20 & 7.3035E-04 & 2.1102 & 4.8195E-04 & 2.1502 \\
		           & 40 & 1.7021E-04 & 2.1013 & 1.1044E-04 & 2.1256 \\
		           & 80 & 3.9928E-05 & 2.0918 & 2.5825E-05 & 2.0964 \\
		           & 160 & 9.4280E-06 & 2.0824 & 6.1603E-06 & 2.0677 \\
        \hline		           
		(0.4, 1.7) & 10 & 4.1944E-03 & -- & 2.9495E-03 & -- \\
		           & 20 & 9.9378E-04 & 2.0775 & 6.8541E-04 & 2.1054 \\
		           & 40	& 2.3585E-04 & 2.0751 & 1.5982E-04 & 2.1005 \\
		           & 80 & 5.6098E-05 & 2.0718 & 3.7467E-05 & 2.0928 \\
		           & 160 & 1.3377E-05 & 2.0682 & 8.8415E-06 & 2.0833 \\
		\hline
		(0.7, 1.4) & 10 & 2.4866E-03 & -- & 1.6468E-03 & -- \\
		           & 20 & 5.7380E-04 & 2.1156 & 3.7013E-04 & 2.1536 \\
		           & 40 & 1.3363E-04 & 2.1023 & 8.5825E-05 & 2.1086 \\
		           & 80 & 3.1405E-05 & 2.0892 & 2.0534E-05 & 2.0634 \\
		           & 160 & 7.4461E-06 & 2.0764 & 5.0382E-06 & 2.0270 \\
		\hline
		(0.9, 1.9) & 10 & 5.4166E-03 & -- & 3.9271E-03 & -- \\
		           & 20 & 1.3277E-03 & 2.0285 & 9.5644E-04 & 2.0377 \\
		           & 40 & 3.2529E-04 & 2.0291 & 2.3276E-04 & 2.0388 \\
		           & 80 & 7.9708E-05 & 2.0289 & 5.6655E-05 & 2.0386 \\
		           & 160 & 1.9545E-05 & 2.0279 & 1.3802E-05 & 2.0373 \\
		\hline
	\end{tabular}
	\label{tab2}
\end{table}

\noindent\textbf{Example 2.}
Consider Eq.~\eqref{eq1.1} with $\kappa = T = 1$, $x_L = -1$, $x_R = 1$ and 
the source term
\begin{equation*}
\begin{split}
f(x,t) = & \frac{\Gamma(4 + \alpha)}{\Gamma(4)} t^3 (1 + x)^2 (1 - x)^2
+ \frac{\kappa \left( t^{3 + \alpha} + 1 \right)}{2 \cos(\pi \beta/2)} 
\Big\{ \frac{4\; \Gamma(3)}{\Gamma(3 - \beta)} \left[ (1 + x)^{2 - \beta} + (1 - x)^{2 - \beta} \right] - \\
& \frac{4\; \Gamma(4)}{\Gamma(4 - \beta)} \left[ (1 + x)^{3 - \beta} + (1 - x)^{3 - \beta} \right] + 
\frac{\Gamma(5)}{\Gamma(5 - \beta)} \left[ (1 + x)^{4 - \beta} + (1 - x)^{4 - \beta} \right] \Big\}.
\end{split}
\end{equation*}
The exact solution is $u(x,t) = \left( t^{3 + \alpha} + 1 \right) (1 + x)^2 (1 - x)^2$.

Table \ref{tab3} lists the performances of methods BS, BFSM, $\mathcal{I}$ and $\mathcal{P}$.
In this table and the following tables, 
``OoM" means out of memory, ``\dag" represents that the (P)BiCGSTAB/(P)CG method does not converge 
to the desired tolerance within 1000 iterations.
Compared with the method BS, our method $\mathcal{P}$ indeed accelerates solving Eq.~\eqref{eq2.5}
and reduces the storage requirement.
Compared with the method BFSM, for large $M$ and $N$ (i.e., $M = N = 1024,2048$), 
the CPU time of the method $\mathcal{P}$ is smaller.
For the unsatisfied cases (i.e., $M = N = 128,256,512$), 
although the numbers Time of the method $\mathcal{P}$ are larger than the BFSM method,
the method $\mathcal{P}$ still has two advantages in terms of storage requirement and parallel computing.
Moreover, we notice that the numbers Iter1 and Iter2 of the method $\mathcal{P}$ are
slightly influenced by the mesh size.
It should be mentioning that from Table \ref{tab3}, for $\alpha = 0.9 (> 0.3624)$,
our method $\mathcal{P}$ still performs well.
\begin{table}[ht]\footnotesize\tabcolsep=8.0pt
	\begin{center}
		\caption{Results of various methods for $M = N$ for Example 2.}
		\centering
		\begin{tabular}{cccccccc}
			\hline
			& & \rm{BS} & \rm{BFSM} & \multicolumn{2}{c}{$\mathcal{I}$} & \multicolumn{2}{c}{$\mathcal{P}$} \\
			[-2pt] \cmidrule(lr){5-6} \cmidrule(lr){7-8} \\ [-11pt]
			$(\alpha,\beta)$ & $N$ & $\mathrm{Time}$ & $\mathrm{Time}$ & $(\mathrm{Iter1},\mathrm{Iter2})$ & $\mathrm{Time}$ & $(\mathrm{Iter1},\mathrm{Iter2})$ & $\mathrm{Time}$ \\
			\hline
			(0.1, 1.1) & 128 & 6.268 & 0.111 & (56.0, 61.0) & 1.021 & (7.0, 5.0) & 0.537 \\
			& 256 & 1160.576 & 0.524 & (80.0, 89.0) & 5.141 & (7.0, 5.0) & 1.831 \\
			& 512 & $>5$ hours & 5.097 & (114.0, 137.0) & 29.161 & (8.0, 5.0) & 5.985 \\
			& 1024 & OoM & 50.948 & (162.0, 191.0) & 261.795 & (8.0, 5.0) & 20.847 \\
			& 2048 & OoM & 537.730 & (228.0, 283.0) & 1605.622 & (8.0, 6.0) & 89.736 \\
			\\
			(0.2, 1.7) & 128 & 6.220 & 0.116 & (113.0, 233.0) & 7.692 & (6.0, 4.0) & 0.447 \\
			& 256 & 1092.552 & 0.534 & (202.0, 438.0) & 48.579 & (6.0, 5.0) & 1.873 \\
			& 512 & $>5$ hours & 5.057 & (360.0, 829.0) & 307.938 & (6.0, 5.0) & 5.786 \\
			& 1024 & OoM &	48.755 & \dag & \dag & (6.0, 5.0) & 20.128 \\
			& 2048 & OoM & 537.980 & \dag & \dag & (7.0, 6.0) & 86.305 \\
			\\
			(0.35, 1.5) & 128 & 6.210 & 0.110 & (78.0, 189.0) & 6.426 & (6.0, 5.0) & 0.539 \\
			& 256 & 1277.143 & 0.526 & (116.0, 310.0) & 34.513 & (6.0, 5.0) & 1.835 \\
			& 512 & $>5$ hours & 4.963 & (163.0, 569.0) & 212.494 & (6.0, 5.0) & 5.789 \\
			& 1024 & OoM & 51.936 & \dag & \dag & (6.0, 5.0) & 20.159 \\
			& 2048 & OoM & 582.579 & \dag & \dag & (6.0, 6.0) & 86.159 \\
			\\
			(0.9, 1.9) & 128 & 6.211 & 0.127 & \dag & \dag & (3.0, 4.0) & 0.459 \\
			& 256 & 1805.931 & 0.604 & \dag & \dag & (3.0, 4.0) & 1.534 \\
			& 512 & $>5$ hours & 5.588 & \dag & \dag & (3.0, 4.0) & 4.831 \\ 
			& 1024 & OoM & 52.250 & \dag & \dag & (3.0, 4.0) & 16.527 \\
			& 2048 & OoM & 599.093 & \dag & \dag & (3.0, 4.0) & 60.787 \\
			\hline
		\end{tabular}
		\label{tab3}
	\end{center}
\end{table}
\begin{table}[H]\footnotesize\tabcolsep=6.0pt
	\begin{center}
		\caption{The condition numbers of $\tilde{\mathcal{M}}$ and 
			$P_{l}^{-1} \tilde{\mathcal{M}} P_{r}^{-1}$ for $M = N$ for Example 2.}
		\centering
		\begin{tabular}{cccc}
			\hline
			($\alpha$, $\beta$) & $N$ & $\kappa_2(\tilde{\mathcal{M}})$ 
			& $\kappa_2(P_{l}^{-1} \tilde{\mathcal{M}} P_{r}^{-1})$ \\
			\hline
			(0.1, 1.1) & 16 & 9.86 & 1.23 \\ 
			& 32 & 20.63 & 1.30 \\
			& 64 & 43.64 & 1.36 \\
			& 128 & 92.89 & 1.42 \\
			\\
			(0.2, 1.7) & 16 & 38.04 & 1.12 \\
			& 32 & 123.25 & 1.15 \\
			& 64 & 400.27 & 1.18 \\
			& 128 & 1300.85 & 1.21 \\
			\\
			(0.35, 1.5) & 16 & 25.02 & 1.17 \\
			& 32 & 68.98 & 1.22 \\
			& 64 & 192.69 & 1.27 \\
			& 128 & 541.93 & 1.31 \\
			\\
			(0.9, 1.9) & 16 & 70.45 & 1.04 \\
			& 32 & 243.78 & 1.06 \\
			& 64 & 870.27 & 1.07 \\
			& 128 & 3171.08 & 1.08 \\
			\hline
		\end{tabular}
		\label{tab4}
	\end{center}
\end{table}

Table \ref{tab4} lists the condition numbers of $\tilde{\mathcal{M}}$ and 
$P_{l}^{-1} \tilde{\mathcal{M}} P_{r}^{-1}$ for different values of $\alpha$ and $\beta$.
From this table, we see that $\kappa_2(P_{l}^{-1} \tilde{\mathcal{M}} P_{r}^{-1}) < 2 \sqrt{3}$
for $\alpha < 0.3624$.
This is in good agreement with our theoretical analysis in Section \ref{sec3.2}.
Although we fail to give the bound of $\kappa_2(P_{l}^{-1} \tilde{\mathcal{M}} P_{r}^{-1})$ for $\alpha > 0.3624$ theoretically,
Table \ref{tab4} shows that for $\alpha = 0.9 (> 0.3624)$, 
the condition number of $P_{l}^{-1} \tilde{\mathcal{M}} P_{r}^{-1}$ is still less than $2 \sqrt{3}$.
Fig.~\ref{fig2} shows the spectrum of $\tilde{\mathcal{M}}$, $P_{l}^{-1} \tilde{\mathcal{M}}$,
$\tilde{\mathcal{M}} P_{r}^{-1}$ and $P_{l}^{-1} \tilde{\mathcal{M}} P_{r}^{-1}$.
From this figure, the eigenvalues of $P_{l}^{-1} \tilde{\mathcal{M}} P_{r}^{-1}$
are more clustered around $1$ than $P_{l}^{-1} \tilde{\mathcal{M}}$ and $\tilde{\mathcal{M}} P_{r}^{-1}$.
In a word, Table \ref{tab3} and Fig.~\ref{fig2} indicate that
our preconditioning technique is reliable and efficient for solving Eq.~\eqref{eq1.1}.
\begin{figure}[ht]
	\centering
	\subfigure[Eigenvalues of $\tilde{\mathcal{M}}$]
	{\includegraphics[width=2.4in,height=2.03in]{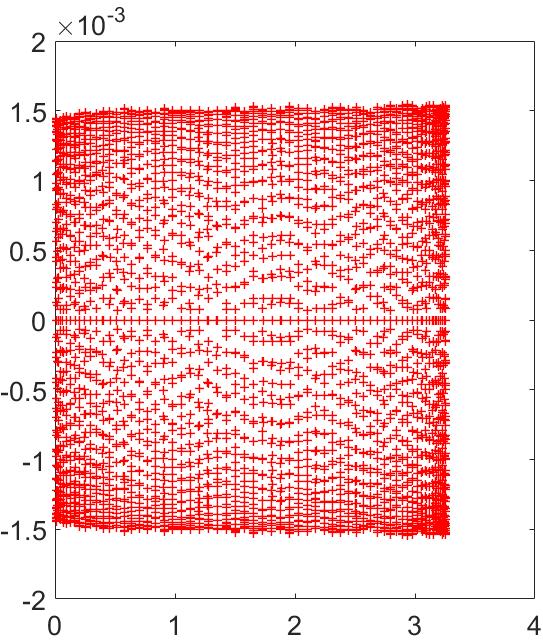}}\hspace{4mm}
	\subfigure[Eigenvalues of $P_{l}^{-1} \tilde{\mathcal{M}}$]
	{\includegraphics[width=2.4in,height=2.03in]{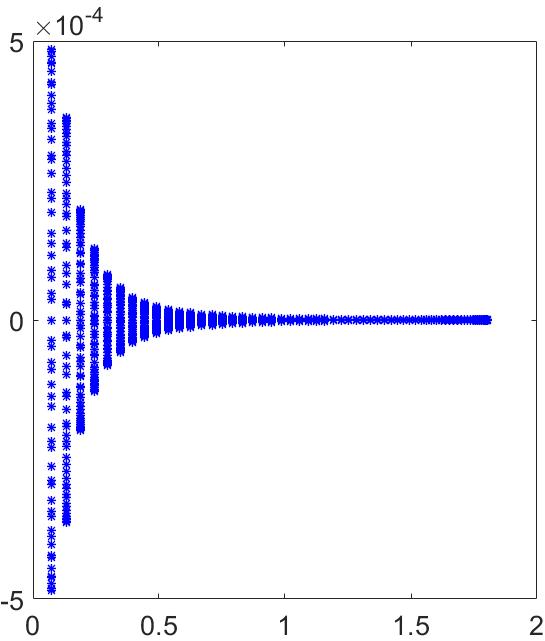}} \\
	\subfigure[Eigenvalues of $\tilde{\mathcal{M}} P_{r}^{-1}$]
	{\includegraphics[width=2.5in,height=2.03in]{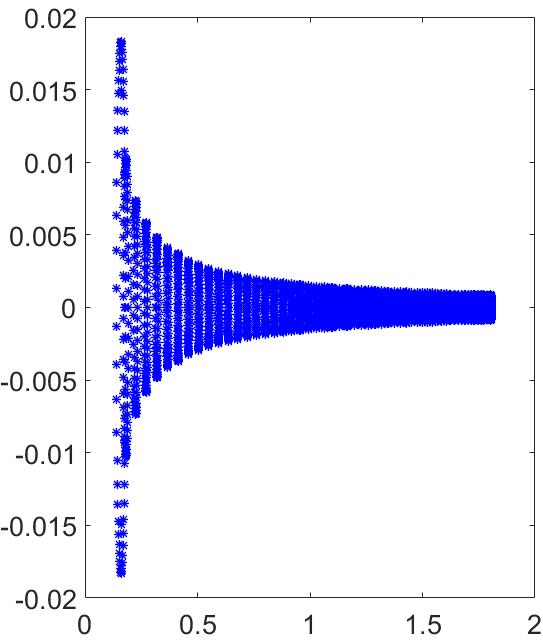}}\hspace{4mm}
	\subfigure[Eigenvalues of $P_{l}^{-1} \tilde{\mathcal{M}} P_{r}^{-1}$]
	{\includegraphics[width=2.5in,height=2.1in]{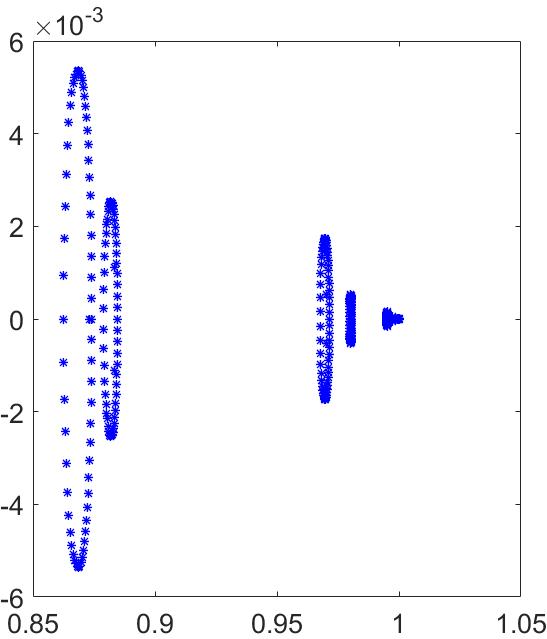}}
	\caption{Spectrum of $\tilde{\mathcal{M}}$, $P_{l}^{-1} \tilde{\mathcal{M}}$,
		$\tilde{\mathcal{M}} P_{r}^{-1}$ and $P_{l}^{-1} \tilde{\mathcal{M}} P_{r}^{-1}$
		for $(\alpha, \beta) = (0.2,1.7)$ and $M = N = 64$ for Example 2.}
	\label{fig2}
\end{figure}

\noindent\textbf{Example 3.}
In this example, we extend our bilateral preconditioning technique 
to solve the following two-dimensional problem of Eq.~\eqref{eq1.1}:
\begin{align*}
\begin{cases}
\sideset{_0^C}{^\alpha_t}{\mathop{\mathcal{D}}} u(x,y,t)
= 0.5\, \left( \frac{\partial^{\beta} u(x,y,t)}{\partial \left| x \right|^{\beta}} + 
\frac{\partial^{\beta} u(x,y,t)}{\partial \left| y \right|^{\beta}} \right)
+ f(x,y,t), 
& (x,y) \in \Omega,~t \in (0, 1], \\
u(x,y,t) = 0, & (x,y) \in \partial \Omega, ~t \in [0, 1],\\
u(x,y,0) = \phi(x,y), & (x,y) \in \Omega,
\end{cases}
\end{align*}
where $\Omega = (0,1)^2$, $\partial \Omega$ is the boundary of $\Omega$,
$\phi(x,y) = 200\, x^2 (1 - x)^2 y^2 (1 - y)^2$ and 
\allowdisplaybreaks[4]
\begin{align*}
f(x,y,t) = & \frac{200\, \Gamma(3 + \alpha + \beta)}{\Gamma(3 + \beta)} t^{2 + \beta} x^2 (1 - x)^2 y^2 (1 - y)^2
+ \frac{50 \left( t^{2 + \alpha + \beta} + 1 \right)}{\cos(\pi \beta/2)} \times \\
& \Big\{ \frac{\Gamma(3)}{\Gamma(3 - \beta)} \left[ x^{2 - \beta} + (1 - x)^{2 - \beta} \right] - 
\frac{2\; \Gamma(4)}{\Gamma(4 - \beta)} \left[ x^{3 - \beta} + (1 - x)^{3 - \beta} \right] + \\
& \frac{\Gamma(5)}{\Gamma(5 - \beta)} \left[ x^{4 - \beta} + (1 - x)^{4 - \beta} \right] \Big\} y^2 (1 - y)^2 +
\frac{50 \left( t^{2 + \alpha + \beta} + 1 \right)}{\cos(\pi \beta/2)} \times \\
& \Big\{ \frac{\Gamma(3)}{\Gamma(3 - \beta)} \left[ y^{2 - \beta} + (1 - y)^{2 - \beta} \right] -
\frac{2\; \Gamma(4)}{\Gamma(4 - \beta)} \left[ y^{3 - \beta} + (1 - y)^{3 - \beta} \right] + \\
& \frac{\Gamma(5)}{\Gamma(5 - \beta)} \left[ y^{4 - \beta} + (1 - y)^{4 - \beta} \right] \Big\} x^2 (1 - x)^2.
\end{align*}
The exact solution is $u(x,y,t) = 200 \left( t^{2 + \alpha + \beta} + 1 \right) x^2 (1 - x)^2 y^2 (1 - y)^2$.

Let $N_x$ and $N_y$ be the number of grid points in $x$- and $y$-direction, respectively.
In this example, we fix $N_x = N_y = N$. 
Table \ref{tab5} lists the results of various methods (i.e., BS, BFSM, $\mathcal{I}$ and $\mathcal{P}$) 
for different values of $\alpha$ and $\beta$.
This table indicates that the proposed method greatly reduces the storage requirement and CPU time.
For large $N$, that is $N = 32,64,128,256$, the numbers Time of the method $\mathcal{P}$ are smaller 
than the method BFSM.
It is worth mentioning that the number of iterations (i.e., $\mathrm{Iter1}$ and $\mathrm{Iter2}$) 
required by the method $\mathcal{P}$ is slightly dependent on the mesh size.
From Table \ref{tab6}, we can see that the condition number of $P_{l}^{-1} \tilde{\mathcal{M}} P_{r}^{-1}$
is less than $2 \sqrt{3}$, even for $\alpha = 0.9(> 0.3624)$.
The spectrum of $\tilde{\mathcal{M}}$, $P_{l}^{-1} \tilde{\mathcal{M}}$,
$\tilde{\mathcal{M}} P_{r}^{-1}$ and $P_{l}^{-1} \tilde{\mathcal{M}} P_{r}^{-1}$
for $(\alpha, \beta) = (0.9,1.9)$ are drawn in Fig.~\ref{fig3}.
From this figure, the eigenvalues of $P_{l}^{-1} \tilde{\mathcal{M}}$,
$\tilde{\mathcal{M}} P_{r}^{-1}$ and $P_{l}^{-1} \tilde{\mathcal{M}} P_{r}^{-1}$
are all clustered around $1$. 
The eigenvalues of $P_{l}^{-1} \tilde{\mathcal{M}} P_{r}^{-1}$ 
are the most clustered one among them.
Moreover, Fig.~\ref{fig4} compares the exact solution and numerical solution 
for $(\alpha, \beta) = (0.35,1.5)$.
It indicates that our numerical method in Section \ref{sec2.1} is accurate.
\begin{table}[H]\footnotesize\tabcolsep=8.0pt
	\begin{center}
		\caption{Results of various methods for $M = N$ for Example 3.}
		\centering
		\begin{tabular}{cccccccc}
			\hline
			& & \rm{BS} & \rm{BFSM} & \multicolumn{2}{c}{$\mathcal{I}$} & \multicolumn{2}{c}{$\mathcal{P}$} \\
			[-2pt] \cmidrule(lr){5-6} \cmidrule(lr){7-8} \\ [-11pt]
			$(\alpha,\beta)$ & $N$ & $\mathrm{Time}$ & $\mathrm{Time}$ & $(\mathrm{Iter1},\mathrm{Iter2})$ & $\mathrm{Time}$ & $(\mathrm{Iter1},\mathrm{Iter2})$ & $\mathrm{Time}$ \\
			\hline
			(0.1, 1.1) & 16 & 15.838 & 0.075 & (18.0, 19.0) & 0.584 & (6.0, 4.0) & 0.202 \\
			& 32 & $>5$ hours & 1.789 & (27.0, 33.0) & 2.333 & (7.0, 5.0) & 1.014 \\
			& 64 & $>5$ hours & 114.623 & (42.0, 51.0) & 18.085 & (7.0, 5.0) & 5.264 \\
			& 128 & $>5$ hours & $\approx 3.7$ hours & (62.0, 69.0) & 205.295 & (8.0, 5.0) & 38.762 \\
			& 256 & OoM & $>5$ hours & (91.0, 92.0) & 1913.149 & (9.0, 5.0) & 358.067 \\
			\\
			(0.2, 1.7) & 16 & 16.083 & 0.084 & (25.0, 36.0) & 0.524 & (5.0, 4.0) & 0.182 \\
			& 32 & $>5$ hours & 1.809 & (47.0, 72.0) & 5.101 & (6.0, 4.0) & 0.798 \\
			& 64 & $>5$ hours & 111.289 & (85.0, 125.0) & 45.596 & (6.0, 4.0) & 4.339 \\
			& 128 & $>5$ hours & $\approx 3.7$ hours & (154.0, 293.0) & 860.445 & (7.0, 4.0) & 31.485 \\
			& 256 & OoM & $>5$ hours & (278.0, 544.0) & $\approx 3.1$ hours & (7.0, 5.0) & 354.081 \\
			\\		
			(0.35, 1.5) & 16 & 15.783 & 0.060 & (22.0, 40.0) & 0.574 & (5.0, 4.0) & 0.187 \\
			& 32 & $>5$ hours & 1.811 & (37.0, 64.0) & 4.718 & (6.0, 5.0) & 0.997 \\
			& 64 & $>5$ hours & 110.938 & (60.0, 108.0) & 39.431 & (7.0, 5.0) & 5.282 \\
			& 128 & $>5$ hours & $\approx 3.7$ hours & (97.0, 186.0) & 549.375 & (7.0, 5.0) & 39.452 \\
			& 256 & OoM & $>5$ hours & (154.0, 333.0) & 6901.452 & (8.0, 5.0) & 359.428 \\
			\\		
			(0.9, 1.9) & 16 & 15.910 & 0.075 & (18.0, 72.0) & 1.073 & (4.0, 3.0) & 0.173 \\
			& 32 & $>5$ hours & 1.910 & (22.0, 202.0) & 13.323 & (4.0, 3.0) & 0.740 \\
			& 64 & $>5$ hours & 112.491 & (22.0, 644.0) & 229.510 & (4.0, 4.0) & 4.962 \\
			& 128 & $>5$ hours & $\approx 3.7$ hours & \dag & \dag & (3.0, 4.0) & 36.029 \\
			& 256 & OoM & $>5$ hours & \dag & \dag & (3.0, 4.0) & 323.964 \\
			\hline
		\end{tabular}
		\label{tab5}
	\end{center}
\end{table}
\begin{table}[ht]\footnotesize\tabcolsep=6.0pt
	\begin{center}
		\caption{The condition numbers of $\tilde{\mathcal{M}}$ and 
			$P_{l}^{-1} \tilde{\mathcal{M}} P_{r}^{-1}$ for $M = N$ for Example 3.}
		\centering
		\begin{tabular}{cccc}
			\hline
			($\alpha$, $\beta$) & $N$ & $\kappa_2(\tilde{\mathcal{M}})$ 
			& $\kappa_2(P_{l}^{-1} \tilde{\mathcal{M}} P_{r}^{-1})$ \\
			\hline
			(0.1, 1.1) & 8 & 5.83 & 1.20 \\
			& 16 & 12.50 & 1.28 \\
			& 32 & 26.71 & 1.36 \\ 
			\\
			(0.2, 1.7) & 8 & 26.71 & 1.36 \\
			& 16 & 49.93 & 1.14 \\
			& 32 & 163.39 & 1.18 \\
			\\
			(0.35, 1.5) & 8 & 11.40 & 1.15 \\
			& 16 & 32.43 & 1.21 \\
			& 32 & 91.73 & 1.27 \\
			\\
			(0.9, 1.9) & 8 & 22.78 & 1.04 \\
			& 16 & 22.78 & 1.04 \\
			& 32 & 306.82 & 1.07 \\
			\hline
		\end{tabular}
		\label{tab6}
	\end{center}
\end{table}

\begin{figure}[H]
	\centering
	\subfigure[Eigenvalues of $\tilde{\mathcal{M}}$]
	{\includegraphics[width=2.4in,height=2.03in]{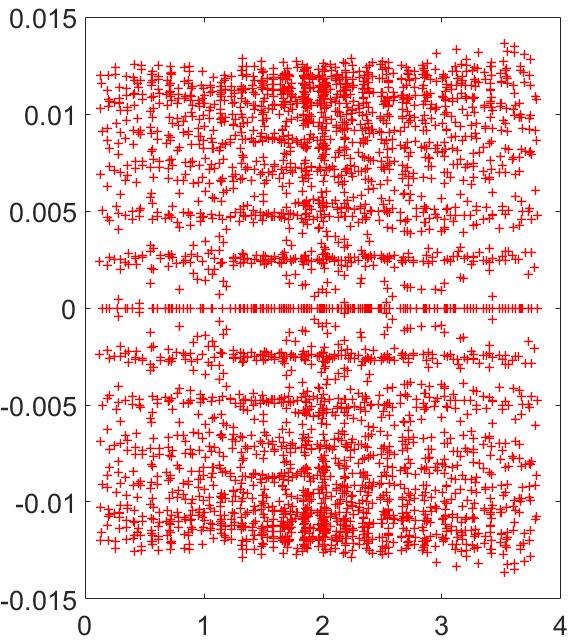}}\hspace{4mm}
	\subfigure[Eigenvalues of $P_{l}^{-1} \tilde{\mathcal{M}}$]
	{\includegraphics[width=2.4in,height=2.1in]{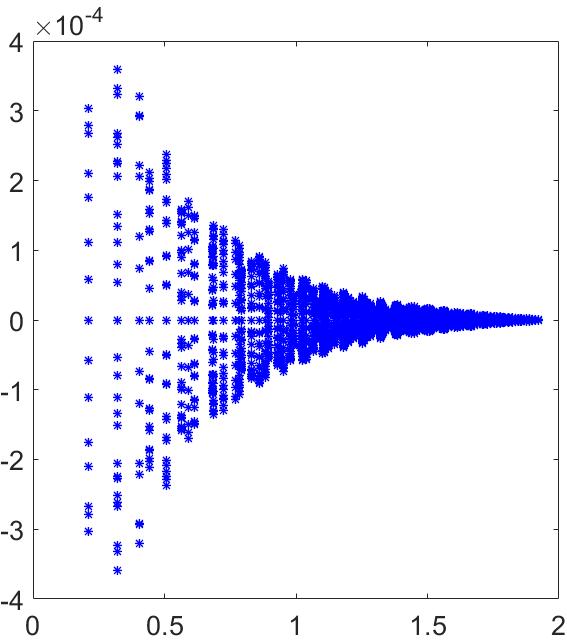}} \\
	\subfigure[Eigenvalues of $\tilde{\mathcal{M}} P_{r}^{-1}$]
	{\includegraphics[width=2.5in,height=2.03in]{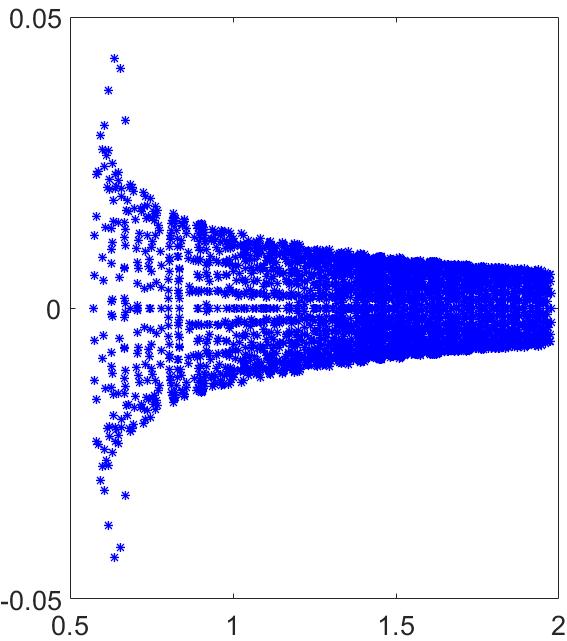}}\hspace{4mm}
	\subfigure[Eigenvalues of $P_{l}^{-1} \tilde{\mathcal{M}} P_{r}^{-1}$]
	{\includegraphics[width=2.5in,height=2.1in]{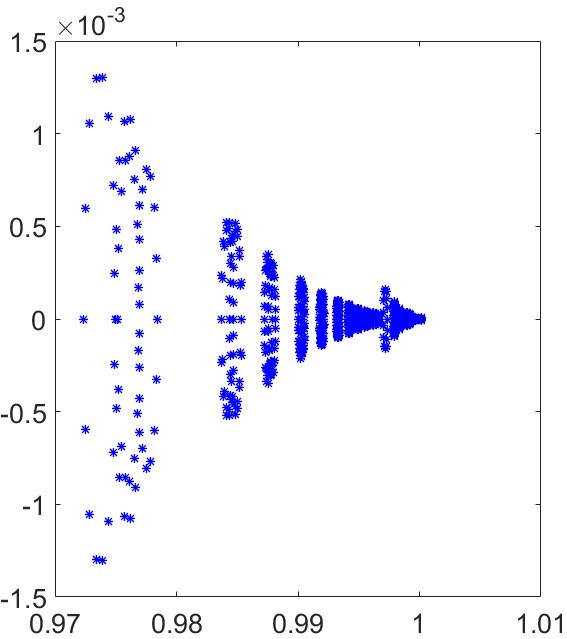}}
	\caption{Spectrum of $\tilde{\mathcal{M}}$, $P_{l}^{-1} \tilde{\mathcal{M}}$,
		$\tilde{\mathcal{M}} P_{r}^{-1}$ and $P_{l}^{-1} \tilde{\mathcal{M}} P_{r}^{-1}$
		for $(\alpha, \beta) = (0.9,1.9)$ and $M = N = 16$ for Example 3.}
	\label{fig3}
\end{figure}
\begin{figure}[ht]
	\centering
	\subfigure[Exact solution]
	{\includegraphics[width=2.7in,height=3.3in]{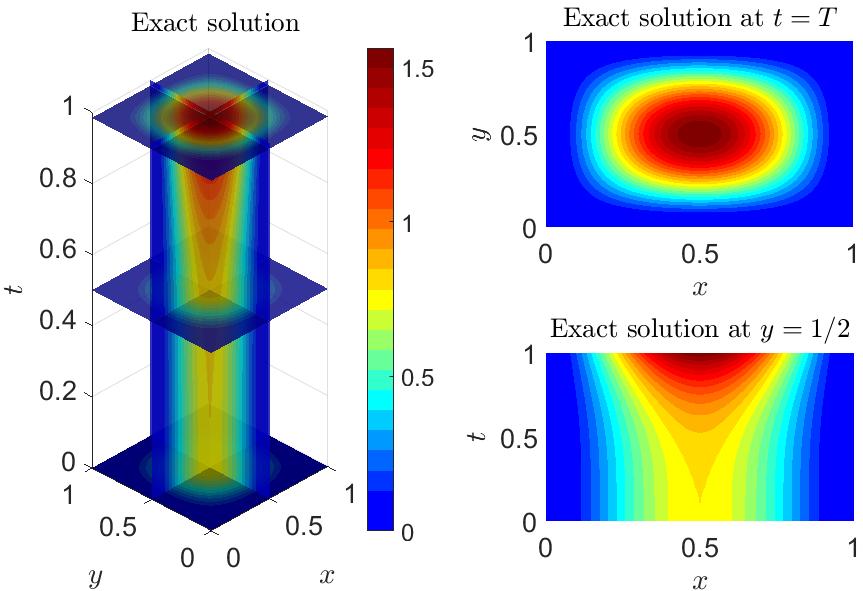}} \hspace{5mm}
	\subfigure[Numerical solution]
	{\includegraphics[width=2.7in,height=3.3in]{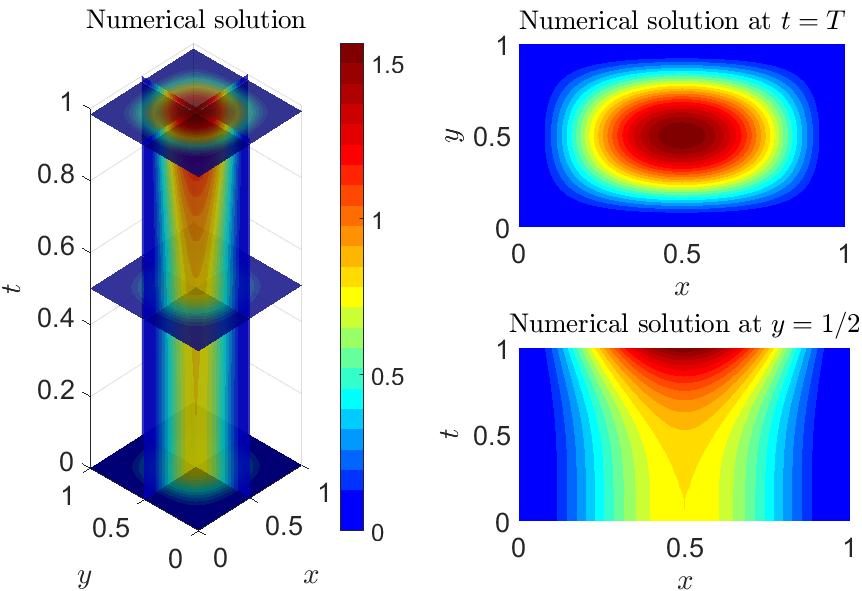}}
	\caption{Comparison of exact solution and numerical solution 
		for $(\alpha, \beta) = (0.35,1.5)$ and $M = N = 64$ for Example 3.}
	\label{fig4}
\end{figure}

\section{Concluding remarks}
\label{sec5}

In this article, we propose a bilateral preconditioning technique to solve 
the L2-type all-at-once system \eqref{eq2.5} arising from 
the TSFBTE \eqref{eq1.1}.
Firstly, combining the L2-type formula \cite{alikhanov2021high} and 
the fractional centered difference method \cite{ccelik2012crank,zhang2020exponential}, 
we propose and analyse an L2-type difference scheme \eqref{eq2.2} with $3 - \alpha$ accuracy in time 
to approximate Eq.~\eqref{eq1.1}.
Secondly, we derive the L2-type all-at-once system \eqref{eq2.5} based on this scheme.
In order to obtain the solution of Eq.~\eqref{eq2.5} efficiently, 
the left ($P_{l}$) and right ($P_{r}$) preconditioners are designed.
The condition number of the preconditioned matrix $P_{l}^{-1} \tilde{\mathcal{M}} P_{r}^{-1}$ is analyzed.
Finally, numerical examples are reported to show the performance of our method.
Moreover, in Example 3, we extend our bilateral preconditioning technique to solve 
the two-dimensional problem of Eq.~\eqref{eq1.1}.
It is worth mentioning that our method can be extended to solve linear all-at-once systems 
with graded time steps such as \cite{zhao2021preconditioning}.
In our future work, we will use the proposed preconditioning technique to solve semilinear problems,
e.g., the Volterra Allen-Cahn equation with weakly singular kernel \cite{gu2020parallel}.

\section*{Acknowledgments}
\addcontentsline{toc}{section}{Acknowledgments}
\label{sec6}

\textit{This research is supported by the National Natural Science Foundation
of China (Nos.~11801463 and 12101089), 
the Applied Basic Research Project of Sichuan Province (No.~2020YJ0007),
the Natural Science Foundation of Sichuan Province (Nos.~2022NSFSC1815 and 2023NSFSC1326)
and the Sichuan Science and Technology Program (No.~2022ZYD0006).}

\section*{References}
\addcontentsline{toc}{section}{References}
\bibliography{references}

\end{document}